\theoremstyle{plain}
\newtheorem{theorem}{Theorem}[section]
\newtheorem{proposition}[theorem]{Proposition}
\newtheorem{corollary}[theorem]{Corollary}
\theoremstyle{definition}
\newtheorem{definition}[theorem]{Definition}
\newtheorem{example}[theorem]{Example}
\theoremstyle{remark}
\newtheorem*{remark*}{Remark}
\newtheorem{remark}[theorem]{Remark}
\newcommand{\R}{\mathbb{R}}
\newcommand{\N}{\mathbb{N}}
\newcommand{\D}{\mathcal{D}}
\newcommand{\st}{\;\ifnum\currentgrouptype=16 \middle\fi|\;}
\newcommand{\x}{\overline{x}}
\title{Discretization of Dirac systems and port-Hamiltonian systems: the role of the constraint algorithm.}
 \author[1]{Mar\'ia\ Barbero-Li\~n\'an}
 \author[2]{Juan Manuel L\'opez Medel}
\author[2]{David\ Mart\'{\i}n de Diego}
\affil[1]{\small Departamento de Matem\'atica Aplicada, Universidad Polit\'ecnica de Madrid, Av. Juan de Herrera 4, 28040 Madrid, Spain. }
\affil[2]{\small Instituto de Ciencias Matem\'aticas (CSIC-UAM-UC3M-UCM), C/Nicol\'as Cabrera 13-15, 28049 Madrid, Spain.}
\date{\today}
\begin{document}

\maketitle

\abstract{We study the discretization of (almost-)Dirac structures using the notion of retraction and discretization maps on manifolds. Additionally, we apply the proposed discretization techniques to obtain numerical integrators for port-Hamiltonian systems and we discuss how to merge the discretization procedure and the constraint algorithm associated to systems of implicit differential equations.  }

   \vspace{2mm}

    \textbf{Keywords:} retraction and discretization maps, discrete Dirac structures, constraint algorithm.

    \vspace{2mm}
    
\textbf{Mathematics Subject Classification (2020):} 65P10; 34A26; 34C40; 37J39; 53D17.

\tableofcontents

\section{Introduction}\label{Intro}

One of the most interesting state-of-the-art research area in mathematics is related with the construction of numerical methods preserving geometric properties as for instance, symplectic integrators for Hamiltonian mechanics, methods preserving first integrals or Poisson structures, numerical methods on manifolds... (see \cite{hairer}).
Most of the relevant dynamical systems in classical mechanics are inherently modeled using the above-mentioned geometric structures. Besides symplectic or Poisson structures, it is also interesting to study other geometric structures such as presymplectic ones. Note that any submanifold of a symplectic manifold inherits a presymplectic structure, idea that it is used to model singular Lagrangian systems and the Dirac theory of constraints. 

 All this plethora of geometric structures (symplectic, Poisson, presymplectic) is unified in a geometric object called 
Dirac structure, which was introduced by Courant and Weinstein~\citep{cournat-weinstein} (see also~\citep{courant,Dorfman} for more details). Apart from this unifying point of view, general Dirac structures have proven to be extremely useful in the modeling of several physical systems, in particular, in the definition of port-Hamiltonian systems (meaning a Hamiltonian systems with ``ports'') which describes general forced Hamiltonian systems that can be interconnected through their ports to build complex physical systems~\citep{vander-schum}.

The main objective of this paper is to discretize Dirac structures in order to construct numerical integrators for the dynamics, once different systems are considered. To achieve that objective we use recent  results about retraction and discretization maps and their lifts to tangent and cotangent bundles \cite{Barbero-DMdD}. The discretization of Dirac structures was previously studied in \cite{leok-ohsawa,leok-ohsawa2, caruso2022discretemechanicalsystemsdirac, Abella-Leok} and, recently in \cite{peng2024discretediracstructuresdiscrete}, but our approach presents a new perspective to discrete Dirac structures. In particular, the application to general configuration manifolds using appropriate retraction or discretization maps (as in  \cite{Barbero-DMdD}) is easier using our techniques. Moreover, the preservation of properties like symplecticity is a direct consequence of the notion of cotangent lift of a discretization map.    

In the following first three sections all the previous notions and tools necessary for the paper are introduced: Dirac structures, constraint algorithm and retraction maps. After the preamble, the sections contain the new results of the paper:
\begin{itemize}
 \item A discretization of Dirac structures and systems depending on a prescribed discretization map is provided in Section~\ref{Sec:DiscreteDirac}. That process is valid on general configuration manifolds.
 \item The role of the constraint algorithm  associated to an implicit system is elucidated in Section~\ref{Subsec:2methods}. To apply first the continuous constraint algorithm and then discretize is different from first discretizing and then apply the discrete constraint algorithm. This is clearly shown in the examples in Sections~\ref{Example:PointVortices}
 and~\ref{example:noholonomic}. Numerical experiments are provided in Section~\ref{Example:PointVortices} that compare the efficiency of both methods with a Runge-Kutta method.
 \item As an indirect consequence, we prove in Proposition~\ref{prop:midpointVortices} that the mid-point discretization of the equations of motion of point vortices in two dimensions preserves the symplecticity. 
 \item Two possible strategies for discretizing port-Hamil\-to\-nian systems using discretization maps are provided in Section~\ref{Sec:PortHamil}.
 \item As a final example of the role of the constraint algorithm in discretization methods we discuss the interesting case of nonholonomic dynamics in Section~\ref{example:noholonomic}. That allows us to obtain a geometric integrator preserving exactly the nonholonomic constraints.  
 \end{itemize}
Some future research lines are presented in Section~\ref{Sec:Future}.

\section{Dirac structures} \label{Sec:Dirac}

We first introduce the main notions related to Dirac structures and Dirac systems. More details can be found in~\citep{cournat-weinstein,courant,Dorfman}. 

\subsection{Linear Dirac structures}

Let $V$ be a $n$-dimensional vector space and we denote by $V^*$  its dual space. Define the  non-degenerate symmetric pairing $\ll \cdot, \cdot \gg $ on $V\oplus V^*$  by
\[
\ll (v_1,\alpha_1),(v_2,\alpha_2) \gg=\langle \alpha_1, v_2\rangle + \langle \alpha_2,v_1\rangle\,,
\label{Eq:Df_Sym_Pair}
\]
for $ (v_1,\alpha_1),(v_2,\alpha_2)\in V\oplus V^*$, where $\langle \cdot, \cdot \rangle$ is the natural pairing between a vector space and its dual.
Given a subspace $U$ of $V\oplus V^*$ define the orthogonal subspace relative to the pairing $\ll\; ,\; \gg$ as
\[
U^{\perp}=\{ (v, \alpha)\in V\oplus V^*\; |\; \forall \; (u, \beta)\in U, \ll (v,\alpha),(u,\beta) \gg=0\}\, .
\]

\begin{definition}
A \emph{linear  Dirac structure on $V$} is a subspace $D\subset V\oplus V^*$ such that $D=D^\perp$.
\end{definition}
Moreover,  besides the notion of Dirac structure, it is interesting  to define other  linear subspaces  on $V\oplus V^*$. In particular, a subspace $U\subset V\oplus V^*$ is called:
\begin{enumerate}
\item  \textit{isotropic} if $U \subseteq U^\perp$.
\item  \textit{coisotropic} if $ U^\perp \subseteq U$.
\end{enumerate}
Thus, a vector subspace $D\subset V\oplus V^*$ is a \textit{Dirac structure} on $V$ if and only if it is maximally isotropic,  that is,  ${\rm dim} \, D=n$ and $\ll (v_1,\alpha_1), (v_2,\alpha_2) \gg =0$ for all $(v_1,\alpha_1)$, $ (v_2,\alpha_2)$ in $D$. 

\begin{example}\label{Example:Dirac}
We now describe some interesting examples of Dirac structures:

\begin{enumerate}
\item Let $F$ be a subspace of $V$, the annihilator $F^\circ$ of $F$ is the subspace of $V^*$ defined as follows
\begin{equation*}
F^\circ=\{\alpha \in V^* \st \langle \alpha, v \rangle=0 \; \mbox{ for all }\; v\in F\}.
\end{equation*}
It can be easily proved that $D_F=F\oplus F^\circ$ is a Dirac structure on $V$.
\item On a presymplectic vector space $(V,\omega)$, the graph of the musical isomorphism $\omega^\flat\colon V\rightarrow V^*$ defines a Dirac structure that we denote $D_\omega$:
\begin{equation*}
D_\omega=\{ (v,\alpha)\in V \oplus V^* \st \alpha=\omega^\flat(v)\},
\end{equation*}
where $\omega^\flat(u)(v)=\omega(u,v)$ for all $u$, $v$ in $V$.
\item Let $\Lambda:V^*\times V^*\to \R$ be a bivector on $V$. Then $\sharp_\Lambda:V^*\to V$ is defined as $\langle\beta,\sharp_\Lambda(\alpha)\rangle=\Lambda(\beta,\alpha)$, with $\alpha,\beta\in V^*$, and its graph defines the Dirac structure
\begin{equation*}
D_\Lambda=\{(v,\alpha)\in V \oplus V^* \st v=\sharp_\Lambda(\alpha)\}.
\end{equation*}
\end{enumerate}
\end{example}

The following fundamental result can be found in~\citep{courant}:
\begin{proposition}
Let $D$ be a Dirac structure on $V$. Define the subspace $F_D\subset V$ to be the projection of $D$ on $V$. Let $\omega_D$ be the 2-form on $F_D$ given by $\omega_D (u,v)=\alpha(v)$, where $u\oplus \alpha\in D$. Then $\omega_D$ is a skew-symmetric form on $F_D$. Conversely, given a vector space $V$, a subspace $F\subset V$ and a skew-symmetric  form $\omega$ on $F$,
\begin{equation*}
D_{F,\omega}=\{u\oplus \alpha \st u\in F, \; \alpha(v)=\omega(u,v) \; \mbox{for all} \; v\in F\}
\end{equation*}
is the only Dirac structure $D$ on $V$ such that $F_D=F$ and $\omega_D=\omega$. \label{Prop:SkewFormDirac}
\end{proposition}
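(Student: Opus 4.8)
The plan is to establish the statement in three stages: first that $\omega_D$ is a well-defined skew-symmetric form on $F_D$, then that $D_{F,\omega}$ is genuinely a Dirac structure inducing the prescribed pair $(F,\omega)$, and finally that it is the only one. Throughout, the only features of a Dirac structure I intend to use are that it is isotropic (so $\ll\cdot,\cdot\gg$ vanishes identically on it) and that it has dimension $n$; as already noted in the excerpt, these two facts together are equivalent to $D=D^\perp$.

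For well-definedness, I would argue as follows. Suppose $u\oplus\alpha_1$ and $u\oplus\alpha_2$ both lie in $D$; by linearity $u\oplus(\alpha_1-\alpha_2)\in D$. Given any $v\in F_D$, pick $\beta\in V^*$ with $v\oplus\beta\in D$. Isotropy of $D$ applied to the pairs $(u,\alpha_i)$ and $(v,\beta)$ gives $\langle\alpha_i,v\rangle+\langle\beta,u\rangle=0$ for $i=1,2$, and subtracting yields $\langle\alpha_1-\alpha_2,v\rangle=0$. Hence $\alpha_1$ and $\alpha_2$ restrict to the same functional on $F_D$, so $\omega_D(u,v)=\alpha(v)$ does not depend on the choice of $\alpha$ and is bilinear in $(u,v)\in F_D\times F_D$. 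Skew-symmetry is then immediate: for $u\oplus\alpha,\ v\oplus\beta\in D$, isotropy gives $\omega_D(u,v)+\omega_D(v,u)=\langle\alpha,v\rangle+\langle\beta,u\rangle=0$.

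For the converse I would first check that $D_{F,\omega}$ is isotropic: if $u\oplus\alpha$ and $v\oplus\beta$ lie in $D_{F,\omega}$, then $u,v\in F$, so $\langle\alpha,v\rangle+\langle\beta,u\rangle=\omega(u,v)+\omega(v,u)=0$ by skew-symmetry of $\omega$. To see it is maximal, consider the projection $\pi\colon D_{F,\omega}\to V$, $u\oplus\alpha\mapsto u$. Its image is exactly $F$, since for each $u\in F$ the functional $v\mapsto\omega(u,v)$ on $F$ extends to some $\alpha\in V^*$, and then $u\oplus\alpha\in D_{F,\omega}$ with $\pi(u\oplus\alpha)=u$. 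Its kernel is $\{0\}\oplus F^\circ$, of dimension $n-\dim F$. Therefore $\dim D_{F,\omega}=\dim F+(n-\dim F)=n$, so $D_{F,\omega}$ is maximally isotropic, i.e. a Dirac structure. By construction $F_{D_{F,\omega}}=\Img\pi=F$, and for $u\oplus\alpha\in D_{F,\omega}$ and $v\in F$ we have $\omega_{D_{F,\omega}}(u,v)=\alpha(v)=\omega(u,v)$, so $\omega_{D_{F,\omega}}=\omega$.

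Uniqueness then follows by a dimension argument. Let $D$ be any Dirac structure with $F_D=F$ and $\omega_D=\omega$. If $u\oplus\alpha\in D$, then $u\in F_D=F$ and, for every $v\in F$, $\alpha(v)=\omega_D(u,v)=\omega(u,v)$, so $u\oplus\alpha\in D_{F,\omega}$; hence $D\subseteq D_{F,\omega}$. Since both are Dirac structures on $V$ they have dimension $n$, so the inclusion forces $D=D_{F,\omega}$. The one place that demands a little care is the dimension count for $D_{F,\omega}$ — namely verifying that $\pi$ surjects onto $F$ and that its kernel is precisely $\{0\}\oplus F^\circ$; every other step is a direct consequence of the (skew-)symmetry of $\ll\cdot,\cdot\gg$ and of $\omega$.
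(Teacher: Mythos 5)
Your argument is correct and complete. Note that the paper itself gives no proof of Proposition~\ref{Prop:SkewFormDirac}: it is stated as a known result and attributed to Courant, so there is no in-paper argument to compare against. Your proof is the standard one: well-definedness of $\omega_D$ via isotropy applied to $u\oplus(\alpha_1-\alpha_2)$ paired against elements over $v\in F_D$, skew-symmetry directly from isotropy, the dimension count $\dim D_{F,\omega}=\dim F+\dim F^\circ=n$ via the projection onto $V$ (with the extension of $\omega(u,\cdot)$ from $F$ to $V^*$ correctly invoked for surjectivity), and uniqueness from the inclusion $D\subseteq D_{F,\omega}$ between two $n$-dimensional subspaces. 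You also correctly rely only on the equivalence, recorded in the paper, between $D=D^\perp$ and maximal isotropy of dimension $n$. No gaps.
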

\noindent In other words, a Dirac structure $D$ on $V$ is uniquely determined by a subspace $F_D\subset V$ and a 2-form $\omega_D$. The case $F=V$ is the example 2 above. 

\subsection{Dirac structures on a manifold}\label{sec:DiracOnManifolds}

A \emph{Dirac structure} $D$ on a manifold $M$, is a vector subbundle of the Whitney sum $TM \oplus T^*M$ such that $D_x\subset T_xM \oplus T_x^*M$ is a linear Dirac structure on the vector space $T_xM$ at each point $x\in M$. A \textit{Dirac manifold} is a manifold $M$ with a Dirac structure $D$ on $M$.

From Proposition~\ref{Prop:SkewFormDirac}, a Dirac structure on $M$ yields a distribution $F_{D_x}\subset T_xM$, whose dimension is not necessarily constant, carrying a 2-form $\omega_D(x)\colon F_{D_x}\times F_{D_x} \rightarrow \mathbb{R}$ for all $x\in M$. 
\begin{theorem}\label{Thm:DiracStructM}
Let $M$ be a manifold,  $\omega$ be a 2-form on $M$ and $F$ be a regular distribution on $M$. Define the skew-symmetric bilinear form $\omega_F$ on $F$ by restricting $\omega$ to $F\times F$. For each $x\in M$, define
\begin{align*}
D_{\omega_F }(x)=\left\{(v_x,\alpha_x)\in T_xM \oplus T_x^*M \right. \st& v_x\in F_x, \; \alpha_x(u_x)=\omega_F(x)(v_x,u_x) \; \\
& \left. \mbox{for all } \; u_x\in F_x\right\}.
\end{align*}
Then $D_{\omega_F}\subset TM \oplus T^*M$ is a Dirac structure on $M$. In fact, it is the only Dirac structure $D$ on $M$ satisfying $F_x=F_{D_x}$ and $\omega_F(x)=\omega_D(x)$ for all $x\in M$.
\end{theorem}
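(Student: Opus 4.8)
The plan is to reduce the manifold statement to the linear result in Proposition~\ref{Prop:SkewFormDirac} by working pointwise, and then to handle the global regularity (smooth vector subbundle) separately using the regularity of $F$ and smoothness of $\omega$.

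First I would fix $x\in M$ and apply Proposition~\ref{Prop:SkewFormDirac} to the vector space $V=T_xM$, the subspace $F=F_x\subset T_xM$, and the skew-symmetric form $\omega=\omega_F(x)$ on $F_x$. That proposition immediately yields that $D_{\omega_F}(x)=D_{F_x,\omega_F(x)}$ is a linear Dirac structure on $T_xM$, and that it is the unique linear Dirac structure $D_x$ with projection $F_{D_x}=F_x$ and induced $2$-form $\omega_{D_x}=\omega_F(x)$. This disposes of the linear-algebraic content at every point and also of the uniqueness claim, since uniqueness is a pointwise condition: any Dirac structure $D$ on $M$ with $F_x=F_{D_x}$ and $\omega_F(x)=\omega_D(x)$ for all $x$ must agree fibrewise with $D_{\omega_F}$ by the uniqueness half of Proposition~\ref{Prop:SkewFormDirac}.

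The remaining point — and the one I expect to be the only real obstacle — is to check that $D_{\omega_F}=\bigcup_{x\in M} D_{\omega_F}(x)$ is a genuine smooth vector subbundle of $TM\oplus T^*M$, not merely a pointwise family of linear Dirac structures. Here I would use that $F$ is a \emph{regular} distribution: locally around any $x_0$ there exist smooth vector fields $X_1,\dots,X_k$ spanning $F$. Then for $v_x=\sum_i f^i(x) X_i(x)\in F_x$ the covector $\alpha_x$ is constrained by $\alpha_x(u_x)=\omega(x)(v_x,u_x)$ for $u_x\in F_x$, i.e. $\alpha_x$ restricted to $F_x$ equals $\iota_{v_x}\omega|_{F_x}$, and is otherwise free on a complement of $F_x$. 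Choosing a local frame adapted to a splitting $TM=F\oplus F^c$, one exhibits $D_{\omega_F}$ locally as the image of a smooth injective bundle map (or as the kernel of a smooth surjective one), whose rank is constantly $n=\dim M$ by the linear result; constancy of rank together with smoothness of the defining map gives the subbundle property.

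Finally I would record the two structural identities $F_{D_{\omega_F}(x)}=F_x$ and $\omega_{D_{\omega_F}}(x)=\omega_F(x)$, which are immediate from the defining formula: the projection of $D_{\omega_F}(x)$ to $T_xM$ is exactly $F_x$, and the induced $2$-form sends $(v_x,u_x)\mapsto \alpha_x(u_x)=\omega_F(x)(v_x,u_x)$, so it coincides with $\omega_F(x)$ on $F_x\times F_x$. Together with the pointwise application of Proposition~\ref{Prop:SkewFormDirac} and the subbundle check, this establishes both existence and uniqueness, completing the proof. The only genuinely delicate step is the smoothness/subbundle verification; everything else is either a direct quotation of the linear proposition or an unwinding of definitions.
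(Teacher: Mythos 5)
Your proof is correct: the paper states this theorem without its own proof (it is imported from Courant), and your argument --- pointwise reduction to Proposition~\ref{Prop:SkewFormDirac} for $V=T_xM$, $F=F_x$, $\omega=\omega_F(x)$, which gives both existence and uniqueness fibrewise, followed by a smooth-subbundle check using the regularity of $F$ --- is exactly the standard route. The one step worth making fully explicit is the local frame for $D_{\omega_F}$: choosing smooth vector fields $X_1,\dots,X_k$ spanning $F$ and smooth one-forms $\theta^{k+1},\dots,\theta^{n}$ spanning $F^{\circ}$ near a point, the $n$ sections $(X_i,\beta_i)$ (with $\beta_i$ any smooth covector field extending $\omega(X_i,\cdot)|_{F}$) together with $(0,\theta^{a})$ are pointwise linearly independent and span each fibre $D_{\omega_F}(x)$, which yields the constant-rank smooth subbundle property you correctly identified as the only nontrivial point.
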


\noindent As usual, we have used the terminology \emph{regular distribution} to mean that $F$ has constant rank.  Examples of Theorem~\ref{Thm:DiracStructM} are the case $\omega=0$ where $D_{\omega_F}=F\oplus F^\circ\subset TM\oplus T^*M$, and the case $F=TM$ where $D_\omega$ is the graph of $\omega$.

The dual version of Theorem~\ref{Thm:DiracStructM} is as follows.

\begin{theorem}\label{Thm:DiracStructMDual} Let $M$ be a manifold and let $\Lambda\colon T^*M\times T^*M\rightarrow \mathbb{R}$ be a skew-symmetric two-tensor. Given a regular codistribution $F^{(*)}\subseteq T^*M$ on $M$, define the skew-symmetric two-tensor $\Lambda_{F^{(*)}}$ on $F^{(*)}$ by restricting $\Lambda$ to $F^{(*)}\times F^{(*)}$. For each $x\in M$, let
\begin{align*}
D_{\Lambda_{F^{(*)}}}(x)=\left\{(v_x,\alpha_x)\in T_xM\times T_x^*M \right. &\st \alpha_x\in F^{(*)}_x,\; \beta_x(v_x) = \Lambda_{F^{(*)}}(x)(\beta_x,\alpha_x) \; \\
& \left. \mbox{for all } \; \beta_x\in F^{(*)}_x\right\},
\end{align*}
then $D_{\Lambda_{F^{(*)}}}\subset TM \oplus T^*M$ is a Dirac structure on $M$.
\end{theorem}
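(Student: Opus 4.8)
Proof sketch. The statement is a pointwise one dressed in bundle language, so the plan is: (i) show that for each fixed $x\in M$ the subspace $D_{\Lambda_{F^{(*)}}}(x)\subset T_xM\oplus T_x^*M$ is a linear Dirac structure on $T_xM$, and (ii) show that these fibres vary smoothly, i.e. $D_{\Lambda_{F^{(*)}}}$ is a smooth vector subbundle. For (i) the cleanest route is to recognize $D_{\Lambda_{F^{(*)}}}(x)$ as the pointwise dual of the construction in Proposition~\ref{Prop:SkewFormDirac}. Apply that proposition to the vector space $W=T_x^*M$, the subspace $F^{(*)}_x\subset W$, and the skew form $\omega=-\Lambda_{F^{(*)}}(x)$ on $F^{(*)}_x$; this produces a linear Dirac structure $\tilde D\subset W\oplus W^*=T_x^*M\oplus(T_x^*M)^*$, which under the canonical identification $(T_x^*M)^*\cong T_xM$ consists exactly of the pairs $\alpha_x\oplus v_x$ with $\alpha_x\in F^{(*)}_x$ and $\beta_x(v_x)=\Lambda_{F^{(*)}}(x)(\beta_x,\alpha_x)$ for all $\beta_x\in F^{(*)}_x$. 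Since the symmetric pairing $\ll\cdot,\cdot\gg$ is symmetric in its two slot-types, the swap map $(\alpha,v)\mapsto(v,\alpha)$ is an isometry between $W\oplus W^*$ and $T_xM\oplus T_x^*M$ with their respective canonical pairings, hence carries the Dirac structure $\tilde D$ to a Dirac structure; and that image is precisely $D_{\Lambda_{F^{(*)}}}(x)$. This gives (i) for free, but since the same two computations reappear for (ii) I would record them directly.

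Concretely, isotropy: for $(v_1,\alpha_1),(v_2,\alpha_2)\in D_{\Lambda_{F^{(*)}}}(x)$, apply the defining condition of the second pair with $\beta_x=\alpha_1\in F^{(*)}_x$ to get $\langle\alpha_1,v_2\rangle=\Lambda_{F^{(*)}}(x)(\alpha_1,\alpha_2)$, and symmetrically $\langle\alpha_2,v_1\rangle=\Lambda_{F^{(*)}}(x)(\alpha_2,\alpha_1)$; adding and invoking skew-symmetry of $\Lambda_{F^{(*)}}$ yields $\ll(v_1,\alpha_1),(v_2,\alpha_2)\gg=0$, so $D_{\Lambda_{F^{(*)}}}(x)\subseteq D_{\Lambda_{F^{(*)}}}(x)^\perp$. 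Dimension: the projection $(v,\alpha)\mapsto\alpha$ maps $D_{\Lambda_{F^{(*)}}}(x)$ onto $F^{(*)}_x$ (surjectivity because the restriction-of-evaluation map $T_xM\to(F^{(*)}_x)^*$ is onto, its kernel being the annihilator $(F^{(*)}_x)^{\circ}$), and the kernel of this projection is $(F^{(*)}_x)^{\circ}\oplus\{0\}$. Writing $k=\dim F^{(*)}_x$, which is constant in $x$ precisely because $F^{(*)}$ is a regular codistribution, we get $\dim D_{\Lambda_{F^{(*)}}}(x)=k+(n-k)=n$. An isotropic subspace of dimension $n=\tfrac12\dim(T_xM\oplus T_x^*M)$ is maximally isotropic, hence a linear Dirac structure on $T_xM$.

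For smoothness I would work near an arbitrary point: using regularity of $F^{(*)}$, choose local $1$-forms $\theta^1,\dots,\theta^k$ framing $F^{(*)}$, complete to a local coframe $\theta^1,\dots,\theta^n$ of $T^*M$ with dual local frame $e_1,\dots,e_n$ of $TM$, and set $\Lambda^{ab}=\Lambda_{F^{(*)}}(\theta^a,\theta^b)$, smooth functions with $\Lambda^{ab}=-\Lambda^{ba}$. Then the $n$ local sections $e_{k+1}\oplus 0,\dots,e_n\oplus 0$ and $\big(\sum_{b=1}^{k}\Lambda^{ba}e_b\big)\oplus\theta^a$ for $a=1,\dots,k$ are smooth, lie in $D_{\Lambda_{F^{(*)}}}$, and are pointwise linearly independent, so by the dimension count above they frame $D_{\Lambda_{F^{(*)}}}$; hence $D_{\Lambda_{F^{(*)}}}$ is a smooth rank-$n$ subbundle, and being fibrewise a linear Dirac structure it is a Dirac structure on $M$. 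The only genuinely delicate point in the whole argument is the bookkeeping of pairing and sign conventions — whether one feeds $\Lambda_{F^{(*)}}$ or $-\Lambda_{F^{(*)}}$ into Proposition~\ref{Prop:SkewFormDirac}, and the correct form of the local spanning sections; everything else is a routine dimension count that relies on the regularity assumption on $F^{(*)}$. (This also exhibits $D_{\Lambda_{F^{(*)}}}$ as the exact dual counterpart of Theorem~\ref{Thm:DiracStructM} and of Example~\ref{Example:Dirac}(3).)
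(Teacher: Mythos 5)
Your proof is correct. The paper itself offers no proof of this theorem: it is stated as ``the dual version of Theorem~\ref{Thm:DiracStructM}'' and left, like Proposition~\ref{Prop:SkewFormDirac}, to the literature (Courant), so your argument fills in exactly what the paper intends --- dualizing Proposition~\ref{Prop:SkewFormDirac} by applying it to $W=T_x^*M$ with the subspace $F^{(*)}_x$ and then transporting along the pairing-preserving swap $(\alpha,v)\mapsto(v,\alpha)$. You also get the one delicate point right, namely that the form to feed into Proposition~\ref{Prop:SkewFormDirac} is $-\Lambda_{F^{(*)}}(x)$ rather than $\Lambda_{F^{(*)}}(x)$ (this is forced by the convention $\beta_x(v_x)=\Lambda_{F^{(*)}}(x)(\beta_x,\alpha_x)$, which matches $\sharp_\Lambda$ in Example~\ref{Example:Dirac}), and your direct isotropy-plus-dimension count and the explicit local frame supply the smooth-subbundle verification that the paper's statement glosses over entirely; the regularity of $F^{(*)}$ is used precisely where you say it is, to make the rank constant and the frame exist.
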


\noindent As an example, let $(M,\Lambda)$ be a Poisson manifold.  If $F^{(*)}=T^*M$, then the Dirac structure defined in Theorem~\ref{Thm:DiracStructMDual} is the graph of the Poisson structure considered as a map from $T^*M$ to $TM$.

\begin{remark} A Dirac structure $D$ on $M$ is called \textit{integrable} (see~\citep{courant}) if the condition
\[
\langle {\rm L}_{X_1} \alpha_2, X_3 \rangle + \langle {\rm L}_{X_2} \alpha_3, X_1 \rangle +\langle {\rm L}_{X_3} \alpha_1, X_2 \rangle=0
\]
is satisfied for all pairs of vector fields and 1-forms $(X_1,\alpha_1)$, $(X_2,\alpha_2)$, $(X_3,\alpha_3)$ in $D$, where ${\rm L}_X$ denotes the Lie derivative along the vector field $X$ on $M$. This condition is linked to the notion of closedness for presymplectic forms and Jacobi identity for brackets, and it is sometimes included in the definition of Dirac structure. The integrability condition is too restrictive to describe, for instance, nonholonomic systems, and, for this reason, we do not include the integrability in the general definition of a Dirac structure and in the notion of discretization.

\end{remark}

\subsection{Dirac systems}\label{section:Diracsystems}

As mentioned in Section~\ref{Intro}, Dirac structures $D\subset TM\oplus T^*M$ are very interesting to describe mechanical systems. In concrete, if we additionally give a Hamiltonian function
$H: M\rightarrow {\mathbb R}$ we can write an implicit Hamiltonian system of the form 
\begin{equation}\label{def: Diracsystem}
\dot{x}\oplus dH(x)\in D_x\, .
\end{equation}
The pair $(D, H)$ determines a \textit{Dirac system}. Such a system defines an implicit system of differential equations determined by the submanifold
\[
S=\{v_x\in T_xM\; |\; (v_x, dH(x))\in D_x\}\, .
\]
 Dirac systems are not always defined by Hamiltonian functions. They can also be determined, for instance, by a Lagrangian submanifold ${\mathcal L}$ of $(T^*M, \omega_M)$ using Morse families (generating functions). Such systems are useful for Lagrangian mechanics, optimal control problems, etc... (see~\cite{cendra1,iglesias2}).
We will refer to this case as a (generalized) Dirac system $(D, {\mathcal L})$.

\section{The constraint algorithm}\label{section: constraint}

In general an implicit differential equation on a manifold $M$ can be described as a submanifold $S\subset  TM$. The problem of integrability consists in  identifying  a subset $S_f\subseteq S$ where for any $v\in  S_f$  there exists at least a curve $\gamma: I\subseteq \R\rightarrow M$ such that $\gamma'(0)=v$ and 
$\gamma'(t)\in S_f$ for all $t\in I$. The  algorithm for extracting the integrable part of an implicit differential equation is called a constraint algorithm \cite{MemaTu1}.

    Let $M$ be a manifold and let $S$ be a submanifold of $TM$ describing a system of implicit differential equations. Denote the initial submanifold by 
    $S_0=S$. First,  we project it onto $M$ using the canonical tangent projection $\tau_M: TM\rightarrow M$, that is,
    \[
    M_0=\tau_M(S_0)\,.
    \] The next step is to consider the subset of $TM$ described by the intersection of the initial submanifold and the tangent bundle of $M_0$ to ensure that the solution evolves tangently to the manifold where it lives. In other words,  $S_1=S_0\cap TM_0$. The process is iterated and a sequence of subsets is obtained:
    \begin{align*}
    & S_0	\supseteq S_1 \supseteq  \dots  \supseteq S_{k-1}  \supseteq  S_{k}\, ,\\
    & M_0	\supseteq M_1 \supseteq  \dots  \supseteq M_{k-1}  \supseteq  M_{k}\, ,
    \end{align*}
    where $S_k=S_{k-1}\cap TM_{k-1}$ and 
    $M_{k}=\tau_M(S_k)$ for all $k$. 
    The algorithm stabilizes when there exists $\bar{k}\in \N$ such that  $S_{\bar{k}}=S_{\bar{k}-1}=:S_f$. Then, $S_f$ is the integrable part of $S$ that could possibly be an empty set.

\section{Retraction maps}

The notion of retraction map can be reviewed with more details in~\cite{AbMaSeBookRetraction}. 
Let $M$ be a smooth manifold and $TM$ its tangent bundle. The tangent space at any point $x \in M$ will be denoted by $T_xM$.
\begin{definition}
     A smooth mapping $R: TM \to M$ is called a \textbf{retraction map} on $M$ if it satisfies the following properties:
\begin{itemize}
    \item \textbf{(R1)} $R(0_x)=x$ and
    \item \textbf{(R2)} $T_{0_x}R_x=id_{T_xM}$ for every $x \in M$,
\end{itemize}
where $R_x:=\left.R\right|_{T_xM}$, $0_x$ denotes the zero vector in $T_xM$ and $id_{T_xM}$ denotes the identity map on $T_xM$.
\end{definition}
\noindent Here we have canonically identified $T_{0_x}(T_xM)$ with $T_xM$. Let us take a look at a few examples of retraction maps. 
\begin{remark}\label{eg1}
    On $\mathbb{R}^n$, we define a retraction map simply as a point on the line passing through $x \in \mathbb{R}^n$ in the direction $v_x \in T_x\mathbb{R}^n \cong \mathbb{R}^n$ as $R(v_x)=x+v_x \in \mathbb{R}^n$.
\end{remark}
\begin{remark}\label{eg2}
    On a Riemannian manifold $(M,g)$, we define a retraction map using the exponential map as $R(v_x)=\exp_x(v_x)$ where $\exp_x(v_x)$ is a point on the geodesic passing through $x$ with velocity $v_x$.
\end{remark}
\begin{remark}\label{eg3}
     On a Lie group $G$, we can define a retraction map using the exponential map, see \cite{AbMaSeBookRetraction}, as $R(v_g)=L_g(\exp(T_gL_{g^{-1}}(v_g)))$ where $L_g: G \to G$ denotes the left translation by $g \in G$.
\end{remark}

\subsection{Discretization maps}
A discretization map is a further generalization of a retraction map, see \cite{Barbero-DMdD}. Unlike a retraction map, a discretization map takes $TM$ to two copies of $M$ and hence can be used to develop numerical integrators on $M$ as we shall see in the sequel.
\begin{definition}\label{def:discret}
    A smooth mapping $R_d: TM \to M \times M$ is called a \textbf{discretization map} on $M$ if it satisfies the following properties:
\begin{itemize}
    \item \textbf{(D1)} $R_d(0_x)=(x,x)$ and
    \item \textbf{(D2)} $T_{0_x}R_x^2-T_{0_x}R_x^1=id_{T_xM}$ for every $x \in M$,
\end{itemize}
where $R_d(v_x):=(R^1(v_x),R^2(v_x))$ for every $v_x \in T_xM$ and $R_x^i:=\left.R^i\right|_{T_xM}$ for $i=1,2$.
\end{definition}

\begin{proposition}\label{prop1}\cite{Barbero-DMdD}
    A discretization map $R_d: TM \to M \times M$ is locally invertible around the zero section of $TM$.
\end{proposition}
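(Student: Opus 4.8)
The plan is to apply the inverse function theorem to $R_d$ viewed as a smooth map between the manifolds $TM$ and $M\times M$, both of which have dimension $2n$ where $n=\dim M$. The key observation is that it suffices to check that the differential $T_{0_x}R_d$ is an isomorphism at each point of the zero section; then the inverse function theorem provides a local smooth inverse in a neighborhood of each $0_x$, and the union of these neighborhoods is an open set containing the zero section.

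First I would set up coordinates. Around a point $x\in M$, choose local coordinates so that $T_xM$, $T_xM\oplus T_xM$, and the fiber directions of $TM$ are all identified with $\R^n$. A tangent vector to $TM$ at $0_x$ decomposes canonically as a ``horizontal'' part in $T_xM$ (coming from the base) and a ``vertical'' part in $T_{0_x}(T_xM)\cong T_xM$ (coming from the fiber). I would then compute $T_{0_x}R_d$ on each of these two pieces. On the horizontal part, condition (D1) says $R_d$ restricted to the zero section is the diagonal embedding $x\mapsto(x,x)$, so the horizontal block of $T_{0_x}R_d$ is the diagonal map $T_xM\to T_xM\oplus T_xM$, $w\mapsto(w,w)$. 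On the vertical part, the differential is $(T_{0_x}R_x^1,T_{0_x}R_x^2)\colon T_xM\to T_xM\oplus T_xM$.

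Next I would argue that these two blocks together give an isomorphism $T_{0_x}(TM)\to T_{(x,x)}(M\times M)$. Both spaces have dimension $2n$, so it is enough to show injectivity, or equivalently that the combined linear map is surjective. Suppose $T_{0_x}R_d(w+u)=0$ for $w$ horizontal and $u$ vertical; this gives the two equations $w+T_{0_x}R_x^1(u)=0$ and $w+T_{0_x}R_x^2(u)=0$. Subtracting and using (D2), $T_{0_x}R_x^2(u)-T_{0_x}R_x^1(u)=u$, we get $u=0$, and then $w=0$ as well. Hence $T_{0_x}R_d$ is injective, therefore bijective. By the inverse function theorem there is an open neighborhood $U_x$ of $0_x$ in $TM$ on which $R_d$ is a diffeomorphism onto an open subset of $M\times M$. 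Taking $U=\bigcup_{x\in M}U_x$ yields an open neighborhood of the zero section on which $R_d$ is locally invertible, which is the assertion. There is no serious obstacle here; the only point requiring a little care is the canonical identification of the tangent space to $TM$ along the zero section with $T_xM\oplus T_xM$ and the verification that, under this identification, the restriction of $R_d$ to the zero section contributes exactly the diagonal block — this is precisely what (D1) encodes, and (D2) is exactly the nondegeneracy input needed to close the argument.
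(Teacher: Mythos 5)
Your proof is correct and complete: the canonical splitting of $T_{0_x}(TM)$ along the zero section into the tangent space of the zero section and the vertical subspace $T_{0_x}(T_xM)\cong T_xM$, the identification of the horizontal block of $T_{0_x}R_d$ with the diagonal map via (D1), and the use of (D2) to kill the vertical component of any kernel element, followed by the inverse function theorem, is exactly the standard argument. The paper itself states this proposition without proof, citing \cite{Barbero-DMdD}, and your reasoning matches the argument given there; the only caveat worth noting is that what you obtain is a local diffeomorphism at each point of the zero section (not injectivity on a whole tubular neighborhood), which is precisely what ``locally invertible'' means here, as the subsequent remark in the paper about restricting to a tubular neighborhood confirms.
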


\begin{remark}
For symplicity, we will assume that the discretization maps are global diffeomorphims between $TM$ and $M\times M$. In general, we would need to work with a tubular section $U$  of the identity section.  
\end{remark}

\begin{example}
    On $\mathbb{R}^n$, we define a discretization map as $R_d(v_x):=(x-\theta v_x,x+(1-\theta)v_x)$ for every $\theta \in [0,1]$. For $\theta=0$, we get the explicit Euler method while for $\theta=0.5$, we get the implicit midpoint rule.
\end{example}
\begin{example}
     On a Riemannian manifold $(M,g)$, we define a discretization map as $R_d(v_x):=(exp(-\theta v_x),exp((1-\theta)v_x))$ for every $\theta \in [0,1]$.
\end{example}
\begin{example}
     On a Lie group $G$, we define a discretization map as $$R_d(v_g):=(L_g(exp(-\theta T_gL_{g^{-1}}(v_g))),L_g(exp((1-\theta) T_gL_{g^{-1}}(v_g))))$$ for every $\theta \in [0,1]$. Here $L_g$ denotes the left translation.
\end{example}

\subsection{Cotangent lift of discretization maps}\label{subsection:cotanget}

We want to define a discretization map on $T^*Q$, that is, 
$R^{T^*}_d: TT^*Q \rightarrow T^*Q\times T^*Q$. The domain lives where the Hamiltonian vector field takes value. Such a map will be obtained by cotangently lifting a discretization map $R_d\colon TQ\rightarrow Q\times Q$ so that the construction $R^{T^*}_d$ will be a symplectomorphism. In order to do that, we need the following three symplectomorphisms (see \cite{Barbero-DMdD} for more details): 
\begin{itemize}
\item The cotangent lift of a diffeomorphism $F: M_1\rightarrow M_2$ defined by:
	\begin{equation*} 
	\hat{F}: T^*M_1 \longrightarrow  T^*M_2 \mbox{ such that } 
\hat{F}=(TF^{-1})^*.
	\end{equation*}
	\item The canonical symplectomorphism:
	\begin{equation*} \alpha_Q\colon T^*TQ  \longrightarrow  TT^*Q  \mbox{ such that }  \alpha_Q(q,v,p_q,p_v)= (q,p_v,v,p_q).
	\end{equation*}

	\item  The symplectomorphism between $(T^*(Q\times Q), \omega_{Q\times Q})$     and   
	$(T^*Q\times T^*Q, \Omega_{12}=pr_2^*\omega_Q-pr^*_1\omega_Q)$:
		\begin{equation*}
	\Phi:T^*Q\times T^*Q \longrightarrow T^*(Q\times Q)\; , \; 
	\Phi(q_0, p_0; q_1, p_1)=(q_0, q_1, -p_0, p_1).	\end{equation*}
	\end{itemize}
The following diagram summarizes the construction process from $R_d$ to $R_d^{T^*}$:
	\begin{equation*}
\xymatrix{ {{TT^*Q }} \ar[rr]^{{{R_d^{T^*}}}}\ar[d]_{\alpha_{Q}} && {{T^*Q\times T^*Q }}  \\ T^*TQ \ar[d]_{\pi_{TQ}}\ar[rr]^{	\widehat{R_d}}&& T^*(Q\times Q)\ar[u]_{\Phi^{-1}}\ar[d]^{\pi_{Q\times Q}}\\ TQ \ar[rr]^{R_d} && Q\times Q }
\end{equation*}

\begin{proposition}\cite{Barbero-DMdD}
	Let $R_d\colon TQ\rightarrow Q\times Q$ be a discretization map on $Q$. Then $${{R_d^{T^*}=\Phi^{-1}\circ \widehat{R_d}\circ \alpha_Q\colon TT^*Q\rightarrow T^*Q\times T^*Q}}$$ 
is a discretization map  on $T^*Q$.
\end{proposition}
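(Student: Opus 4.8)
The plan is to verify the two defining properties (D1) and (D2) of a discretization map for the composite $R_d^{T^*}=\Phi^{-1}\circ\widehat{R_d}\circ\alpha_Q$, working through the diagram one arrow at a time. The key preliminary observation is that each of the three maps $\alpha_Q$, $\widehat{R_d}$, $\Phi^{-1}$ is a diffeomorphism (indeed a symplectomorphism), so $R_d^{T^*}$ is a well-defined smooth map, and it suffices to track where the zero section $0_{\mu_q}\in T_{\mu_q}T^*Q$ goes and what the derivative at such a point looks like.

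First I would establish (D1). Take $\mu_q\in T^*Q$ over $q\in Q$ and chase the zero vector $0_{\mu_q}$ through the diagram. Under $\alpha_Q$, the zero vector $0_{\mu_q}=(q,p,0,0)$ in coordinates $(q,p,p_q,p_v)$ on $T^*TQ$ — wait, more carefully: $\alpha_Q(q,v,p_q,p_v)=(q,p_v,v,p_q)$, and the zero vector in $T_{\mu_q}T^*Q$ corresponds under $\alpha_Q^{-1}$ to an element of $T^*TQ$ sitting over $0_q\in TQ$; then $\widehat{R_d}$ maps it over $R_d(0_q)=(q,q)$ by (D1) for $R_d$, and since $\widehat{R_d}$ is a cotangent lift it sends the relevant zero covector to a zero covector over $(q,q)$; finally $\Phi^{-1}$ sends the zero covector over $(q,q)$ in $T^*(Q\times Q)$ to $(\mu_q,\mu_q)$ because $\Phi(q_0,p_0;q_1,p_1)=(q_0,q_1,-p_0,p_1)$ forces $p_0=p_1=\mu_q$ when the momentum components vanish appropriately. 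Hence $R_d^{T^*}(0_{\mu_q})=(\mu_q,\mu_q)$. This is essentially bookkeeping with the explicit coordinate formulas for $\alpha_Q$ and $\Phi$ together with (D1) for $R_d$.

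Next I would establish (D2), which is the substantive part. One computes $T_{0_{\mu_q}}R_d^{T^*}$ as the composition of the derivatives $T\Phi^{-1}\circ T\widehat{R_d}\circ T\alpha_Q$ at the appropriate points, and then extracts the two components $(R_d^{T^*})^1$, $(R_d^{T^*})^2$ and forms the difference of their tangent maps. The cleanest route is to use the fact — already available from the construction in \cite{Barbero-DMdD} and Proposition~\ref{prop1} — that $R_d$ being locally invertible near the zero section lets us identify $TT^*Q$ near $0_{\mu_q}$ with $T^*Q\times T^*Q$ near the diagonal via $R_d^{T^*}$, and then (D2) for $R_d^{T^*}$ amounts to the statement that the induced map on the ``difference'' directions is the identity on $T_{\mu_q}T^*Q$. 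Concretely, I would work in Darboux-type coordinates $(q,p)$ on $T^*Q$, write $R_d(v_q)=(R^1(v_q),R^2(v_q))$ with the linearization $T_{0_q}R^2-T_{0_q}R^1=\mathrm{id}$, then compute the coordinate expression of $\widehat{R_d}$ (a cotangent lift, so block-triangular with the transpose-inverse Jacobian of $R_d$ in the momentum block), apply the linear isomorphisms $\alpha_Q$ and $\Phi^{-1}$ which are constant-coefficient in coordinates, and read off that the base-point difference contributes $\mathrm{id}_{T_qQ}$ on the $q$-directions while the momentum block contributes $\mathrm{id}_{T_q^*Q}$ by a transpose/duality argument dual to the one for $R_d$ itself.

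The main obstacle I anticipate is the momentum (fiber) component of (D2): showing that the difference of the tangent maps acts as the identity on the cotangent directions requires carefully dualizing the relation $T_{0_q}R^2-T_{0_q}R^1=\mathrm{id}_{T_qQ}$ through the cotangent lift $\widehat{R_d}=(TR_d^{-1})^*$, whose derivative at the relevant point involves second derivatives of $R_d$ (the ``Christoffel-like'' terms) that could in principle obstruct the clean cancellation. The resolution is that $\alpha_Q$ is precisely the canonical swap that converts these second-derivative terms into the off-diagonal block of a symplectic matrix, and the difference $(R_d^{T^*})^2-(R_d^{T^*})^1$ kills exactly that off-diagonal contribution, leaving the identity; making this cancellation transparent — rather than a brute-force computation — is where I would spend the most care, ideally by invoking the symplectomorphism property of each arrow and the naturality of the canonical 1-form under cotangent lifts, so that the computation reduces to the linear-algebraic identity for the standard symplectic form on $T^*Q\oplus T^*Q$ with the sign convention built into $\Phi$.
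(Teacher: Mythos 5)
Your overall strategy --- verifying (D1) and (D2) of Definition~\ref{def:discret} by pushing the zero vector and its fibre derivative through $\alpha_Q$, $\widehat{R_d}$ and $\Phi^{-1}$ --- is the right one (the paper itself gives no proof, importing the result from \cite{Barbero-DMdD}), and your coordinate endpoint agrees with Example~\ref{example3}. But two steps, as written, do not go through. First, in the (D1) chase the intermediate covector is \emph{not} zero: in the coordinates $(q,v,p_q,p_v)$ on $T^*TQ$ used in the paper, $\alpha_Q^{-1}(0_{\mu_q})$ is the covector $(p_q,p_v)=(0,\mu_q)$ sitting over $0_q\in TQ$. If it were zero, $\Phi^{-1}$ would return the pair of zero covectors $(0_q,0_q)$, not $(\mu_q,\mu_q)$, so your reading would contradict the statement whenever $\mu_q\neq 0$. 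What actually happens is that $\widehat{R_d}=(TR_d^{-1})^*$ sends $(0,\mu_q)$ to the covector $(-\mu_q,\mu_q)$ over $(q,q)$: writing $D R_d(q,0)=\bigl(\begin{smallmatrix} I & B_1\\ I & B_2\end{smallmatrix}\bigr)$ with $B_i=T_{0_q}R^i_q$, the bottom row $(Z,W)$ of its inverse satisfies $Z=-W=-(B_2-B_1)^{-1}=-\mathrm{id}$ by (D2) for $R_d$, and the sign in $\Phi(q_0,p_0;q_1,p_1)=(q_0,q_1,-p_0,p_1)$ then restores $(\mu_q,\mu_q)$. In particular, (D1) for $R_d^{T^*}$ already uses (D2) for $R_d$, not only (D1).

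Second, in your (D2) argument the second-derivative terms you flag are genuinely present (the fibre derivative of the momentum part of $\widehat{R_d}$ at the nonzero covector $(0,\mu_q)$ contracts $\partial_v\bigl[(DR_d)^{-1}\bigr]$ with $\mu_q$), and they are not removed by $\alpha_Q$ or by the symplectomorphism property of the three arrows: symplecticity is what yields the corollary that follows the proposition, not conditions (D1)--(D2) (a symplectomorphism $TT^*Q\to T^*Q\times T^*Q$ need not be a discretization map), so ``invoking the symplectomorphism property of each arrow'' cannot be the resolution. The cancellation in $T_{0_{\mu_q}}(R_d^{T^*})^2-T_{0_{\mu_q}}(R_d^{T^*})^1$ comes instead from the fact that (D1) and (D2) for $R_d$ hold at \emph{every} base point $q$, so their $q$-derivatives vanish: differentiating the identity $ZA_1+WA_2=0$ (with $A_i=\partial R^i/\partial q$) in $v$ at $v=0$ and using $A_1=A_2=\mathrm{id}$, $Z=-\mathrm{id}$, $W=\mathrm{id}$ reduces the offending term to $\partial_q\bigl(T_{0_q}R^2_q-T_{0_q}R^1_q\bigr)=\partial_q(\mathrm{id})=0$. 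Once that identity is supplied, your computation closes: the $\dot p$-derivatives of the two momentum components differ by $\mathrm{id}$, the $\dot q$-derivatives of the momentum components cancel, and the base components contribute $B_2-B_1=\mathrm{id}$, which is exactly (D2) for $R_d^{T^*}$.
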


\begin{corollary}\cite{Barbero-DMdD} The discretization map
 ${{R_d^{T^*}}}=\Phi^{-1}\circ 	\widehat{R_d}\circ \alpha_Q\colon T(T^*Q)\rightarrow T^*Q\times T^*Q$ is a symplectomorphism between $(T(T^*Q), {\rm d}_T \omega_Q)$ and $(T^*Q\times T^*Q, \Omega_{12})$.
\end{corollary}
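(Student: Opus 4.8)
The plan is to read the conclusion straight off the construction of $R_d^{T^*}$: by the previous Proposition it is the composite $\Phi^{-1}\circ\widehat{R_d}\circ\alpha_Q$ appearing in the diagram, and each of the three maps in that composite is a symplectomorphism for the pair of symplectic structures naturally attached to its source and target. Since a composite of symplectomorphisms is again a symplectomorphism, the whole statement reduces to pinning down which form lives on each node and checking the three factors one at a time. The relevant forms are: on $T^*Q$ the canonical form $\omega_Q$, with tangent (complete) lift ${\rm d}_T\omega_Q$ on $TT^*Q$; on the intermediate cotangent bundles $T^*TQ$ and $T^*(Q\times Q)$ the canonical symplectic forms $\omega_{TQ}$ and $\omega_{Q\times Q}$; and on $T^*Q\times T^*Q$ the form $\Omega_{12}=pr_2^*\omega_Q-pr_1^*\omega_Q$.

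Then I would verify the three factors. First, $\alpha_Q$ is Tulczyjew's canonical isomorphism, and one checks $\alpha_Q^*({\rm d}_T\omega_Q)=\omega_{TQ}$: in the adapted coordinates $(q,v,p_q,p_v)$ on $T^*TQ$ and $(q,p,\dot q,\dot p)$ on $TT^*Q$ one has $\omega_{TQ}=dq\wedge dp_q+dv\wedge dp_v$ and ${\rm d}_T\omega_Q=d\dot q\wedge dp+dq\wedge d\dot p$, and substituting the coordinate expression $\alpha_Q(q,v,p_q,p_v)=(q,p_v,v,p_q)$ turns the second into the first; alternatively this is quoted from \cite{Barbero-DMdD}. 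Hence $\alpha_Q$, and therefore its inverse (which is the map $TT^*Q\to T^*TQ$ actually used in the displayed formula and in the diagram), is a symplectomorphism. Second, $\widehat{R_d}=(TR_d^{-1})^*$ is the cotangent lift of the diffeomorphism $R_d\colon TQ\to Q\times Q$ (a diffeomorphism by Proposition~\ref{prop1}), and the cotangent lift of any diffeomorphism $F$ pulls back the Liouville $1$-form, $\widehat{F}^*\theta_{Q\times Q}=\theta_{TQ}$, hence the canonical symplectic form, so $\widehat{R_d}^*\omega_{Q\times Q}=\omega_{TQ}$. Third, by the construction of $\Phi$ recalled just before the statement one has $\Phi^*\omega_{Q\times Q}=\Omega_{12}$, so $\Phi^{-1}\colon(T^*(Q\times Q),\omega_{Q\times Q})\to(T^*Q\times T^*Q,\Omega_{12})$ is a symplectomorphism.

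To finish I would compose: $R_d^{T^*}$ carries $(TT^*Q,{\rm d}_T\omega_Q)$ to $(T^*TQ,\omega_{TQ})$, then to $(T^*(Q\times Q),\omega_{Q\times Q})$, then to $(T^*Q\times T^*Q,\Omega_{12})$, each step a symplectomorphism, so $(R_d^{T^*})^*\Omega_{12}={\rm d}_T\omega_Q$. Since the previous Proposition already guarantees that $R_d^{T^*}$ is a discretization map, it is in particular a (local) diffeomorphism, so this pullback identity is exactly the assertion that $R_d^{T^*}$ is a symplectomorphism between $(TT^*Q,{\rm d}_T\omega_Q)$ and $(T^*Q\times T^*Q,\Omega_{12})$.

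The only step that is more than bookkeeping is the identification $\alpha_Q^*({\rm d}_T\omega_Q)=\omega_{TQ}$, i.e. the classical fact that the canonical map $\alpha_Q$ intertwines the canonical symplectic structure of $T^*TQ$ with the tangent lift of the canonical symplectic structure of $T^*Q$; I would either cite it from \cite{Barbero-DMdD} or dispatch it with the short Darboux-coordinate computation indicated above. The symplecticity of cotangent lifts and the stated property of $\Phi$ are standard, so no genuine obstacle remains there; the remaining care is only in keeping straight the orientation of $\alpha_Q$ used in the composite versus the one used to define it.
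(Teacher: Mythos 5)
Your proof is correct and follows essentially the same route as the paper (and the cited reference \cite{Barbero-DMdD}): the three factors $\alpha_Q$, $\widehat{R_d}$ and $\Phi^{-1}$ are exactly the symplectomorphisms listed in the construction of $R_d^{T^*}$, so the corollary is their composition. Your remark about keeping the orientation of $\alpha_Q$ straight is well taken, since the paper defines $\alpha_Q\colon T^*TQ\to TT^*Q$ but uses the arrow $TT^*Q\to T^*TQ$ in the diagram; this is a notational matter and does not affect the argument.
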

\noindent In local coordinates $(q,p,\dot{q},\dot{p})$ for $T(T^*Q)$, the symplectic form ${\rm  d}_T\omega_Q={\rm d}q \wedge {\rm d}\dot{p}+{\rm d}\dot{q}\wedge {\rm d}p$.
\begin{example}\label{example3} On $Q={\mathbb R}^n$ the discretization map 
	$R_d(q,v)=\left(q-\frac{1}{2}v, q+\frac{1}{2}v\right)$ is cotangently lifted to
		$$R_d^{T^*}(q,p,\dot{q},\dot{p})=\left( q-\dfrac{1}{2}\,\dot{q}, p-\dfrac{\dot{p}}{2}; \; q+\dfrac{1}{2}\, \dot{q}, p+\dfrac{\dot{p}}{2}\right)\, .$$
\end{example}

\section{Discretization of Dirac structures}\label{Sec:DiscreteDirac}
Given a discretization map $R_d: TM\rightarrow M\times M$ 
we define the product space 
$$(M\times M)\underset{R_d^{-1}, \pi_M}{\times} T^*M=\{((x_1, x_2), \alpha_x)\; |
\; \tau_M(R_d^{-1}(x_1, x_2))=\pi_M(\alpha_x)\} \, .
$$
In the sequel, we will denote by 
$$(M\times M)^{R_d} \oplus T^*M\equiv (M\times M)\underset{R_d^{-1}, \pi_M}{\times} T^*M\, .
$$

\begin{definition}
Given a Dirac structure $D$ on $M$ we define the {\bf discrete Dirac structure}
$D_d$ as the submanifold of $(M\times M)^{R_d} \oplus T^*M$ given by
\[
D_d=\{((x_1, x_2), \alpha_x)\in (M\times M)^{R_d} \oplus T^*M\; |\; 
(R_d^{-1}(x_1, x_2), \alpha_x)\in D\} \, .
\]
\end{definition}

However, $D_d$ is not a Dirac structure as defined in Section~\ref{sec:DiracOnManifolds}, but it is defined from a Dirac structure. 

\begin{example} The discretization of the third case in Example~\ref{Example:Dirac} is developed.
    On $\mathbb{R}^n$, consider a bivector $\Lambda$  on ${\mathbb R}^n$ and using the midpoint discretization we obtain  $R_d(v_x):=(x-\frac{1}{2} v_x,x+\frac{1}{2}v_x)$: 
\[
D_d=\left\{(x_k, x_{k+1}, \alpha_{x_{k+1/2}})\; |\ 
R_d^{-1}(x_k, x_{k+1})=\Lambda (x_{k+1/2}) \alpha_{x_{k+1/2}}\right\}\, ,
\]
 where $x_{k+1/2}=\frac{x_k+ x_{k+1}}{2}$.\\

\end{example}
\begin{example}\label{example-sphere}
   Consider the unit sphere $S^{n-1}$ endowed with the  Riemannian metric  $g$ obtained by embedding $S^{n-1}$ in ${\mathbb R}^n$ (with the canonical metric), then the discretization map associated to the Riemannian exponential is given by
   \[
   R_d(x, \xi)=\left(x, x\cos ||\xi||+\frac{\sin ||{x}||}{||\xi||}\xi\right)\, ,
   \]
   with inverse map $R_d^{-1}(x,y)=(x,log^g_x(y))$, where the logarithmic map is given by
   \[
   log^g_x (y)=\arccos{\langle x, y\rangle}\frac{P_x(y-x)}{||P_x(y-x)||}\, ,
   \]
     where
     $P_x (v)=(I-xx^T)v$ and $v$ is a column vector. 
If $\Lambda$ is an almost Poisson tensor on the $S^{n-1}$, then the discrete Dirac structure is given by 
\[
D_d=\left\{(x_k, x_{k+1}, \alpha_{x_{k}})\; |\ 
log^g_{x_{k}}(x_{k+1})=\Lambda (x_{k}) \alpha_{x_{k}}\right\}.
\]
 Another option is to consider as discretization map $\widetilde{R}_d\colon TS^{n-1}\rightarrow S^{n-1} \times S^{n-1}$ given by
 \begin{equation}\label{wideRdSn}
 \widetilde{R}_d(x, v)=\left(\frac{x-v/2}{\|x-v/2\|}, \frac{x+v/2}{\|x+v/2\|}\right)\, ,
 \end{equation}
whose inverse map is:
 \[
\widetilde{R}_d^{-1}(x_k, x_{k+1})=
 \left(\frac{x_k+x_{k+1}}{\|x_k+x_{k+1}\|}, \frac{2(x_{k+1}-x_k)}{\|x_k+x_{k+1}\|}\right)\, .
 \]
Then, the discrete Dirac structure would be
\[
\widetilde{D}_d=\left\{\left.\left(x_k, x_{k+1}, \alpha_{\tfrac{x_k+x_{k+1}}{\|x_k+x_{k+1}\|}}\right)\; \right|\; 
\frac{2(x_{k+1}-x_k)}{\|x_k+x_{k+1}\|}=\Lambda \left(\frac{x_k+x_{k+1}}{\|x_k+x_{k+1}\|}\right) \alpha_{\tfrac{x_k+x_{k+1}}{\|x_k+x_{k+1}\|}} \right\}.
\]
 
\end{example}
\begin{example}
     On a Lie group $G$, we define a discretization map as $$R_d(v_g):=\left(g,L_g(exp( T_gL_{g^{-1}}(v_g)))\right)\equiv (g, g\, exp(g^{-1}v_g)) .$$
 A discrete Dirac structure is given by
\[
D_d=\left\{(g_k, g_{k+1}, \alpha_{g_{k}})\; |\ 
g_k\,exp^{-1}(g_k^{-1}g_{k+1})=\Lambda (g_{k}) \alpha_{g_{k}}\right\},
\]
where $\alpha_{g_k}\in T^*_{g_k}G$. 
\end{example} 
\begin{example}\label{example:canonicaldirac}
Using the cotangent lift $R_d^{T^*}: TT^*Q\rightarrow T^*Q\times T^*Q$ of a discretization map $R_d: TQ\rightarrow Q\times Q$ (see Subsection \ref{subsection:cotanget}) 
we obtain a discrete Dirac structure using the canonical symplectic form $\omega_Q$ in $T^*Q$ as: 
\[
D_d=\left\{(\mu_k, \mu_{k+1}, \alpha_{\mu})\in (T^*Q\times T^*Q)^{R^{T^*}_d} \oplus T^*T^*Q\; |\; 
\ i_{(R^{T^*}_d)^{-1}(\mu_k, \mu_{k+1})}\omega_Q=\alpha_{\mu}\right\},
\]
where $\pi_{T^*Q}(\mu)=\tau_{T^*Q}((R^{T^*}_d)^{-1}(\mu_k, \mu_{k+1}))$.

\noindent For instance, using the cotangent lift in Example~\ref{example3} for $(\mu_k,\mu_{k+1})=(q_k,p_k,q_{k+1},p_{k+1})$ we obtain
the discrete Dirac structure 
\begin{align*}
D_d=\left\{(q_k, p_k, q_{k+1}, p_{k+1}),\right.&  (P_q \,dq+P_p\,dp)_{(q_{k+1/2}, p_{k+1/2})})\; |\; 
q_{k+1/2}=\frac{q_k+q_{k+1}}{2},\\
 & \left. p_{k+1/2}=\frac{p_k+p_{k+1}}{2}, \; q_{k+1}-q_k=P_p; \; p_{k+1}-p_k=-P_q\right\}\, .
\end{align*}
\end{example}
\noindent Similarly, using a discretization map we can define all the corresponding structures (symmetric pairing, isotropic, coisotropic spaces...) on 
$(M\times M)^{R_d} \oplus T^*M$.

\subsection{Discretization of Dirac systems}
A Dirac structure determines a specific relation between cotangent and tangent bundles. This relation is the key to derive the dynamics once a submanifold of the cotangent bundle is provided. Typically, this cotangent bundle is specified given the submanifold $\hbox{Im}\, dH$ where $H: M\rightarrow {\mathbb R}$. Observe that $\hbox{Im}\, dH$ is a Lagrangian submanifold of $(T^*M, \omega_M)$, but other cases are also interesting (specially other types of Lagrangian submanifolds \cite{cendra1,iglesias2}).

Given a submanifold ${\mathcal L}$ of $T^*M$ (typically  ${\mathcal L}$ is a Lagrangian submanifold of $(T^*M, \omega_M)$), a Dirac structure $D$  and a discretization map
$R_d\colon TM \rightarrow M\times M$, we define the \textit{discrete Dirac system} as the subset of $M\times M$ given by
\begin{equation}
S^h_d=\left\{(x_1, x_2)\in M\times M\; |\; 
(\tfrac{1}{h}R^{-1}_d(x_1, x_2), \alpha_x)\in D_x, \alpha_x\in {\mathcal L}, x=\tau_M(R^{-1}_d(x_1, x_2))\right\} \, .
\end{equation}
We introduce the time step $h>0$ since in this paper we are thinking of discretization of continuous system. The product by $1/h$ in $\tfrac{1}{h}R^{-1}_d(x_1, x_2)$ is understood with respect to the vector bundle structure $\tau_{M}: TM\rightarrow M$. That is if $R^{-1}_d(x_1, x_2)=(x, v)$, then $\tfrac{1}{h}R^{-1}_d(x_1, x_2)=(x, \frac{1}{h}v)$. 
\begin{example}
The reduced free rigid body is described by the equations
\begin{equation}\label{eq-q}
\dot{\xi}=\xi\times I^{-1}\xi, \quad \xi\in S^2,
\end{equation}
where 
\[
I=\begin{pmatrix}I_1&0&0\\
0&I_2&0\\
0&0&I_3
\end{pmatrix}
\]
is the inertia tensor. 
In this case the Dirac structure is given by the linear Poisson bivector 
\[
\Lambda_{\xi}=\begin{pmatrix}0&-\xi_3&\xi_2\\
\xi_3
&0&-\xi_1\\
-\xi_2&\xi_1&0
\end{pmatrix}\, .
\]
The Hamiltonian function is 
\[
H(\xi)=\frac{1}{2}\xi \cdot I^{-1}\xi\,.
\]
Therefore, Equation~\eqref{eq-q} is precisely
$
\dot{\xi}=\Lambda_{\xi}dH(\xi)
$.
Using the discretization map $\tilde{R}_d$ in Equation~\eqref{wideRdSn} we obtain the discrete equations (see also \cite{klas}):
\[
\frac{2(\xi_{k+1}-\xi_k)}{h{\|\xi_k+\xi_{k+1}\|}}=\left(\frac{\xi_k+\xi_{k+1}}{\|\xi_k+\xi_{k+1}\|}\right)\times I^{-1}\left(\frac{\xi_k+\xi_{k+1}}{\|\xi_k+\xi_{k+1}\|}\right)\, ,
\]
which could be simplified to 
\[
\frac{\xi_{k+1}-\xi_k}{h}=\left(\frac{\xi_k+\xi_{k+1}}{2}\right)\times I^{-1}\left(\frac{\xi_k+\xi_{k+1}}{\|\xi_k+\xi_{k+1}\|}\right)\, .
\]
\end{example}
\subsubsection{Discretization of a Lagrangian system}\label{example: canonicaldirac system}
Dirac systems can also be defined by a Lagrangian function. The discretization process defined above can also be applied in the Lagrangian framework, even if the Lagrangian function is not regular. For this purpose it is necessary the canonical antisymplectomorphism $ {\mathcal I}_{TQ}$ between the symplectic manifolds $(T^*T^*Q, \omega_{T^*Q})$ and $(T^*TQ, \omega_{TQ})$ ~\citep{XuMac} whose local expression is:
\begin{equation}\label{eq:I_TQlocal}
	   	 {\mathcal I}_{TQ}(q,p, \mu_q, \mu_p)=(q, \mu_p, -\mu_q, p) \, .  
	 \end{equation}
A Lagrangian function  $L: TQ\rightarrow {\mathbb R}$ defines the following Lagrangian submanifold:
\[
{\mathcal L}= {\mathcal I}_{TQ}^{-1}(\hbox{Im} dL)=\left\{(q,p;P_q,P_p)\in T^*T^*Q\; \vert \; p=\frac{\partial L}{\partial \dot{q}}, \; {P}_q=-\frac{\partial L}{\partial q}, \;  P_p= \dot{q}\right\}\, 
\]
of $(T^*T^*Q, \omega_{T^*Q})$. 
If $L$ is a regular Lagrangian, then the Lagrangian submanifold ${\mathcal L}$ is a horizontal Lagrangian submanifold that projects onto the entire $T^*Q$. Locally, the Lagrangian submanifold is defined by  ${\mathcal L}=\hbox{Im} dH$, where $H: T^*Q\rightarrow {\mathbb R}$ is the associated Hamiltonian function \cite{abraham-marsden}. However, when the Lagrangian function $L$ is singular, that is, the Hessian of $L$ with respect to velocities is singular, the Lagrangian submanifold ${\mathcal L}$ is not horizontal. This fact determines the starting point of a constraint algorithm. See Section \ref{section: constraint} for more details.  

In the Lagrangian framework, the discrete Dirac structure ${\mathcal D}$ introduced in Example \ref{example:canonicaldirac} can be used to obtain the following discrete Dirac system 
\[
{S}^h_d=\left\{(\mu_k, \mu_{k+1})\in (T^*Q\times T^*Q)\; |\; 
\ i_{\frac{1}{h}(R^{T^*}_d)^{-1}(\mu_k, \mu_{k+1})}\omega_Q\in {\mathcal L}\right\}\, .
\]
Let $(\mu_k,\mu_{k+1})=(q_k,p_k,q_{k+1},p_{k+1})$, the cotangent lift of the midpoint discretization map leads to the following symplectic integrator
\begin{align*}
  q_{k+1}-q_k&=h\frac{\partial L}{\partial \dot{q}}\left(\frac{q_k+q_{k+1}}{2},\frac{q_{k+1}-q_{k}}{h}\right), \\ 
 p_{k+1}-p_k&=-h\frac{\partial L}{\partial {q}}\left(\frac{q_k+q_{k+1}}{2}, \frac{q_{k+1}-q_{k}}{h}\right)\, .
\end{align*}
Observe that $S^h_d$ defines a Lagrangian submanifold of $T^*Q\times T^*Q$ equipped with the symplectic structure $\Omega_{Q\times Q}=\hbox{pr}_2^*\omega_Q-\hbox{pr}_1^*\omega_Q$ where $\hbox{pr}_a: T^*Q\times T^*Q\rightarrow T^*Q$ are the corresponding projections with $a=1,2$. The Lagrangian character of $S^h_d$ is equivalent to the symplecticity of the implicit map defined $S^h_d$ (see \cite{Barbero-DMdD}, for more details).
\subsection{Two discretization methods for Dirac systems defined by a Hamiltonian a function} \label{Subsec:2methods}
After introducing the constraint algorithm in Section~\ref{section: constraint}, let us study how to use the constraint algorithm for Dirac systems in order to obtain numerical integrators for them. As shown in Section \ref{section:Diracsystems}, a Dirac system determined by the pair $(D, {\mathcal L})$, where $D$ is a Dirac structure and ${\mathcal L}$ is a submanifold of $T^*M$, defines
an implicit system as follows
\[
S_0=\{(x,v)\in TM\; |\; (v_x, dH(x))\in D_x\}\,,
\]
if $\mathcal L={\rm Im} {\rm d}H$ for a Hamiltonian function.
To discretize such a Dirac system two different options are considered:  
\begin{enumerate}
 \item {\bf Option 1}: To use a discretization map $R_d: TM\rightarrow M\times M$ in order to obtain a discrete version of $S_0$:   
 $$(S^h_0)_d=\left\{(x_1, x_2)\in M\times M\; |
\; \tfrac{1}{h}R_d^{-1}(x_1, x_2)\in S_0\right\} \, .
$$
\item {\bf Option 2}: First, to apply the constraint algorithm in order to find the integrable part $S_f\subseteq TM_f$ such that $M_f=\tau_M(S_f)$ and $S_f$. Second, to use a discretization map 
$R^f_d: TM_f\rightarrow M_f\times M_f$ to obtain the corresponding discrete structure of $S_f$, that is,   
$$\left(S^d_f\right)_d=\left\{(x_1, x_2)\in M_f\times M_f\; |
\; \tfrac{1}{h}(R^f_d)^{-1}(x_1, x_2)\in S_f\right\} \, .
$$
\end{enumerate}
To illustrate the differences between both approaches, we revisit the case of point vortices in the following section.

\section{A paradigmatic example: Point vortices} \label{Example:PointVortices}
Consider a system of $n$ interacting point vortices in two dimensions
\cite{newton,rowley-marsden}. The equations are given by
\begin{align}\label{eq:point}
\dot{x}^i&=-\frac{1}{2\pi}\sum_{j\not=i}^n \frac{\Gamma_j (y^i-y^j)}{\left(l^{ij}\right)^2},\\
\dot{y}^i&=\frac{1}{2\pi}\sum_{j\not=i}^n \frac{\Gamma_j (x^i-x^j)}{\left(l^{ij}\right)^2},\nonumber
\end{align}
where $l^{ij}=\sqrt{(x^i-x^j)^2+(y^i-y^j)^2}$ are the intervortical distances.
These equations can be expressed in terms of the following singular Lagrangian function
\[
L(x, y, \dot{x}, \dot{y})=\frac{1}{2}\sum_{j=1}^n\Gamma_j (x^j\dot{y}^j-y^j\dot{x}^j)-\frac{1}{4\pi}\sum_{j\not= k}^n \Gamma_j\Gamma_k \log \left(\left(l^{jk}\right)^2\right),
\]
that is,
\begin{equation}\label{eq: lagrangian}
L(q, \dot{q})=\langle \alpha (q), \dot{q}\rangle- H(q),
\end{equation}
where $q=(x,y)\in \R^{2n}$ and  
\begin{align}
\alpha(q)&=\alpha_i (q) {\rm d}q^i=-\frac{1}{2}\Gamma_{ij}y^j dx^i+\frac{1}{2}\Gamma_{ij}x^i dy^j \, ,\label{eq:alpha}\\
H(q)&=\frac{1}{4\pi}\sum_{j\not= k}^n \Gamma_j\Gamma_k \log  \left( \left(l^{jk}\right)^2 \right)\, , \label{eq:h}
\end{align}
where $\Gamma_{ij}=\Gamma_i \delta_{ij}$ are constant.
 The Euler-Lagrange equations for the singular Lagrangian in~\eqref{eq: lagrangian} are
    \begin{equation*}
        \dfrac{d}{dt}(\alpha_i(q))=\dfrac{\partial L}{\partial q^i}(q).
    \end{equation*}
After operating:
    \begin{equation}\label{eq:vortices}
        \dfrac{\partial\alpha_i}{\partial q^j}(q)\, \dot{q}^j=\dfrac{\partial\alpha_j}{\partial q^i}(q) \, \dot{q}^j-\dfrac{\partial H}{\partial q^i}(q),
    \end{equation}
    which are precisely Equations~\eqref{eq:point}. 

Following Subsection~\ref{example: canonicaldirac system}, for any Lagrangian function $L: TQ\rightarrow \R$
we can define
the Lagrangian submanifold
\begin{align*}
{\mathcal L}&= {\mathcal I}_{TQ}^{-1}(\hbox{Im} dL)\\& =\left\{(q,p;P_q,P_p)\in T^*T^*Q \; \mid \; p_i=\alpha_i(q);\;  {P}_{q^i}=-\frac{\partial \alpha_j}{\partial q^i}\dot{q}^j+\frac{\partial H}{\partial q^i}; \; 
 P_{p_i}= \dot{q}^i\right\}\, .
\end{align*}
In the example under consideration $Q=\R^{2n}$. Note that this Lagrangian submanifold does not project onto the entire $T^*\R^{2n}$ because the Lagrangian is singular. Thus, ${\mathcal L}$ is not a horizontal submanifold with respect to the the projection $\pi_{T^*Q}
\colon T^*T^*Q \rightarrow T^*Q$. Let us start the constraint algorithm by taking \begin{equation}\label{eq:S0vortices}
S_0=\sharp^{-1}_{\omega_Q}({\mathcal L})=\left\{(q,p;\dot{q},\dot{p})\in TT^*Q \; \mid \; p_i=\alpha_i(q); \; \dot{p}_i= \frac{\partial \alpha_j}{\partial q^i}\dot{q}^j-\frac{\partial H}{\partial q^i}\right\}\, .
\end{equation}
The steps of the constraint algorithm give us
\begin{align*}
M_0&=\{(q^i, p_i)\in T^*Q \; | \; p_i=\alpha_i(q)\}\subset T^*Q, \\
S_1&=S_0\cap TM_0\\ &=\left\{(q^i,p_i,\dot{q}^i,\dot{p}_i)\; |\; p_i=\alpha_i(q);\;  \dot{p}_i=\dfrac{\partial\alpha_i}{\partial q^j}(q)\dot{q}^j; \; 
\dfrac{\partial\alpha_i}{\partial q^j}(q)\dot{q}^j=\dfrac{\partial\alpha_j}{\partial q^i}(q)\dot{q}^j-\dfrac{\partial H}{\partial q^i}(q)\right\},
   \end{align*} 
   because $$TM_0=\left\{(q^i,p_i,\dot{q}^i,\dot{p}_i)\; |\; p_i=\alpha_i(q); \; \dot{p}_i=\dfrac{\partial\alpha_i}{\partial q^j}(q)\dot{q}^j\right\}.$$
    Note that $\tau_{T^*Q}(S_1)=M_0=M_1$. Hence, the constraint algorithm finishes in the first step.

\[
\begin{tikzcd}
    & TT^*Q \ar[swap]{d} \\
    S_0 \ar[hookrightarrow]{ur} \ar[swap]{dr} & T^*Q  \\
    S_0 \cap TM_0 \ar[hookrightarrow]{u} \ar[hookrightarrow]{d} \ar[rightarrow]{r} & M_0 \ar[hookrightarrow]{u} \ar[rightarrow]{dl} \\
    TM_0  &
\end{tikzcd}
\]

The inclusion $i_{M_i}: M_i\rightarrow T^*Q$ provides every submanifold $M_i$ with a presymplectic 1-form  $$i_{M_i}^*\omega_Q=\omega_{M_i}.$$
In the particular case of point vortices we have that the two-form $\omega_0$ in $M_0\subset \R^{2n}$ is symplectic since
\[
\omega_{M_0}=dq^i\wedge d\alpha_i=-d\alpha\, .
\]
Thus, the dynamics $\Phi: \R^{2n}\rightarrow \R^{2n}$ solution to the differential system 
\[
\dfrac{\partial\alpha_i}{\partial q^j}(q)\dot{q}^j=\dfrac{\partial\alpha_j}{\partial q^i}(q)\dot{q}^j-\dfrac{\partial H}{\partial q^i}(q),
\]
equivalent to Equations~\eqref{eq:point}, 
preserves the symplectic form $\omega_{M_0}$, that is,
\[
\Phi^*({d}\alpha)=d\alpha\, .
\]

\subsection{Method 1: First discretization }

Using the mid-point rule $R_d(q,v)=\left(q-\frac{1}{2}v, q+\frac{1}{2}v\right)$ 
and the corresponding cotangent lift $R_d^{T^*}\colon TT^*Q\rightarrow T^*Q\times T^*Q$, 
		$$R_d^{T^*}(q,p,\dot{q},\dot{p})=\left( q-\dfrac{1}{2}\,\dot{q}, p-\dfrac{\dot{p}}{2}; \; q+\dfrac{1}{2}\, \dot{q}, p+\dfrac{\dot{p}}{2}\right)\, ,$$
whose inverse map is defined by
    \[
    (R_d^{T^*})^{-1}(q_k,p_k,q_{k+1},p_{k+1})=\left(\dfrac{q_k+q_{k+1}}{2},\dfrac{p_k+p_{k+1}}{2},q_{k+1}-q_k,p_{k+1}-p_k\right)\, .
    \]
 For a small step size $h>0$, 

\begin{equation}\label{eq:S0hd}
\left(S^h_0\right)_d=\left\{\ (q_k,p_k,q_{k+1},p_{k+1})\in \mathbb{R}^{2n}\times \mathbb{R}^{2n} \; \left| \; \dfrac{1}{h}\,  (R_d^{T^*})^{-1}(q_k,p_k,q_{k+1},p_{k+1})\in S_0\right.\right\},\end{equation}
 where $S_0$ is defined in Equation~\eqref{eq:S0vortices}. Then, the discrete equations encoded in  
$(S^h_0)_d$ are:
    \begin{align*}
    	\dfrac{p_k+p_{k+1}}{2}&=\alpha\left(\dfrac{q_k+q_{k+1}}{2}\right),\\
    	\dfrac{p_{k+1}-p_k}{h}&=\dfrac{\partial\alpha_j}{\partial q}\left(\dfrac{q_k+q_{k+1}}{2}\right)\left(\dfrac{q_{k+1}^j-q_k^j}{h}\right)-\dfrac{\partial H}{\partial q}\left(\dfrac{q_k+q_{k+1}}{2}\right),
    \end{align*}
    or equivalently
    \begin{align*}
    p_{k}&=\alpha\left(\dfrac{q_k+q_{k+1}}{2}\right)-
\dfrac{h}{2}\, \left(\dfrac{\partial\alpha_j}{\partial q}\left(\dfrac{q_k+q_{k+1}}{2}\right)\left(\dfrac{q_{k+1}^j-q_k^j}{h}\right)-\dfrac{\partial H}{\partial q}\left(\dfrac{q_k+q_{k+1}}{2}\right)\right),\\
p_{k+1}&=\alpha\left(\dfrac{q_k+q_{k+1}}{2}\right)+
\dfrac{h}{2}\, \left(\dfrac{\partial\alpha_j}{\partial q}\left(\dfrac{q_k+q_{k+1}}{2}\right)\left(\dfrac{q_{k+1}^j-q_k^j}{h}\right)-\dfrac{\partial H}{\partial q}\left(\dfrac{q_k+q_{k+1}}{2}\right)\right)   . 
    \end{align*}
From the Lagrangian function in~\eqref{eq: lagrangian}, the following discrete Lagrangian function can be defined (see \cite{rowley-marsden}): 
\[
L_d(q_k, q_{k+1})=h\, L\left(\dfrac{q_k+q_{k+1}}{2}, \dfrac{q_{k+1}-q_k}{h}\right).
\] 
In \cite{rowley-marsden}, the authors prove that this discrete Lagrangian is regular.

It can be proved that Equations (\ref{eq:S0hd}) are precisely
\[
p_{k}=-D_1 L_d (q_k, q_{k+1}), \qquad p_{k+1}=D_2 L_d (q_k, q_{k+1})\, .
\]
The well-known discrete Euler-Lagrange equations in~\cite{marsden-west}, $$D_2L_d(q_k,q_{k+1})+D_1L_d(q_{k+1},q_{k+2})=0,$$ become in the example under study: 
\begin{multline*}
    	\frac{1}{2}\dfrac{\partial\alpha_j}{\partial q^i}\left(\dfrac{q_k+q_{k+1}}{2}\right)\left(\dfrac{q_{k+1}^j-q_k^j}{h}\right)+\frac{1}{2}\dfrac{\partial\alpha_j}{\partial q^i}\left(\dfrac{q_{k+1}+q_{k+2}}{2}\right)\left(\dfrac{q^j_{k+2}-q^j_{k+1}}{h}\right)\\
    	-\frac{1}{h}\left(\alpha_i\left(\dfrac{q_{k+1}+q_{k+2}}{2}\right)-\alpha_i\left(\dfrac{q_{k}+q_{k+1}}{2}\right)\right)\\
    	=\frac{1}{2}\dfrac{\partial H}{\partial q^i}\left(\dfrac{q_k+q_{k+1}}{2}\right)+\frac{1}{2}\dfrac{\partial H}{\partial q^i}\left(\dfrac{q_{k+1}+q_{k+2}}{2}\right).
    \end{multline*}
Observe that these equations correspond to a second-order system of difference equations. However, the continuous dynamics in~\eqref{eq:point} is given by a system of first-order differential equations because of the singularity of the continuous Lagrangian function. The use of the cotangent lift of a discretization map to obtain a numerical integrator guarantees that the canonical symplectic form ${\rm d}p_{k+1}\wedge {\rm d}q_{k+1}-{\rm d}p_{k}\wedge {\rm d}q_{k}$ of $T^*Q\times T^*Q$ is preserved. In other words, the discrete flow
\[
\Phi_d\colon T^*Q\longrightarrow T^*Q, \quad \Phi_d(q_k, p_k)=(q_{k+1}, p_{k+1}),
\]
 determined by $\left(S_0^h\right)_d$ in Equation~\eqref{eq:S0hd} is a symplectomorphism. As shown in Section~\ref{Example:PointVortices}, the flow of the continuous system preserves ${\rm d}\alpha$. However, both preservations are only related when $h$ tends to $0$ (see \cite{rowley-marsden} for more details). \\
We define
\[
f(z)=\dfrac{1}{2\pi i}\sum_{j\neq l}\dfrac{\Gamma_l}{z^j-z^l}.
\]
Remembering that $q=(x,y)$ so $z=x+iy$, and writting $z_{k+1/2}=(z_k+z_{k+1})/2$, we have the symplectic method
 \begin{equation}\label{ec:m1pv} \bar{z}^j_{k+2}=\bar{z}^j_k+h\left(f(z_{k+1/2})+f(z_{k+1+1/2})\right).
 \end{equation}
\begin{remark}
As in the continuous case, it is possible to apply a discrete constraint algorithm to the difference equations~\cite{iglesias}. However, both constraint algorithms do not necessarily agree. For instance, in the example of point vortices in Section~\ref{Example:PointVortices}, the continuous constraint algorithm finishes at the first step $S_1$, but the discrete Lagrangian $L_d$ is regular and there is no need to use the discrete constraint algorithm.     
\end{remark}

  \subsection{Method 2: Continuous constraint algorithm plus discretization}  \label{Sec:ConstraintDiscrete}
    Now, we first apply the
continuous constraint algorithm. Then, we discretize using a discretization map. 
From Section~\ref{Example:PointVortices}, we know that  
\begin{align*}
M_f&=\{(q^i, p_i)\in T^*Q \; | \; p_i=\alpha_i(q)\}\subset T^*Q\; , \\
S_f&=S_0\cap TM_0=\left\{(q^i,p_i,\dot{q}^i,\dot{p}_i)\in TT^*Q\; |\; p_i=\alpha_i(q), \; \dot{p}_i=\dfrac{\partial\alpha_i}{\partial q^j}(q)\dot{q}^j,\right.\\
&\qquad \qquad \qquad \qquad \left.\dfrac{\partial\alpha_i}{\partial q^j}(q)\dot{q}^j=\dfrac{\partial\alpha_j}{\partial q^i}(q)\dot{q}^j-\dfrac{\partial H}{\partial q^i}(q)\right\}.
   \end{align*} 
Note that $M_f$ can be identified with the entire manifold $Q={\mathbb R}^{2n}$ because $M_f={\rm Im} \, \alpha$. Analogously, $S_f$ can be projected onto $TQ$ by the tangent map $T\pi_Q\colon TT^*Q \rightarrow TQ$, $T\pi_Q(q,p,\dot{q},\dot{p})=(q,\dot{q})$. Let us denote $T\pi_Q(S_f)$ by $S_f^{TQ}$. Hence, we can directly apply the midpoint rule on $Q$ by means of the discretization map $R_d\colon TQ\rightarrow Q\times Q$, $R_d(q,\dot{q})=(q-\dot{q}/2,q+\dot{q}/2)$, and reconstruct the numerical scheme on $Q$ to obtain the numerical integrator on $T^*Q$.  For a small positive step size $h$, similarly to Equation~\eqref{eq:S0hd}, we have: 
\[
\dfrac{1}{h} \, (R_d)^{-1}(q_k,q_{k+1})\in S_f\subseteq TQ.
\]
Equivalently, 
\begin{equation}\label{eq:dv2}
    	\dfrac{\partial\alpha_i}{\partial q^j}\left(\dfrac{q_k+q_{k+1}}{2}\right)\left(\dfrac{q_{k+1}^j-q_k^j}{h}\right)
        =\dfrac{\partial\alpha_j}{\partial q^i}\left(\dfrac{q_k+q_{k+1}}{2}\right)\left(\dfrac{q_{k+1}^j-q_k^j}{h}\right)-\dfrac{\partial H}{\partial q^i}\left(\dfrac{q_k+q_{k+1}}{2}\right),
    \end{equation}
    which exactly corresponds to the midpoint discretization of Equations~\eqref{eq:vortices}.\\
    \begin{equation}\label{eq:nmcn}
        \bar{z}^j_{k+1}=\bar{z}^j_k+hf(z_{k+1/2}).
    \end{equation}
In principle, $R_d$ is not designed to preserve any symplectic form such as $d\alpha$. But in this particular case of point vortices dynamics, it can be proved that the midpoint rule preserves the symplectic form $d\alpha$. To prove that statement the following technical result is needed:
\begin{proposition}\label{prop:rdsmp}
    The map
    \[
    R_d:(TQ, d_Td\alpha)\to (Q\times Q,-d\alpha+d\alpha)\footnote{ Here $d_Td\alpha$ denotes the tangent lift of $d\alpha$ and $-d\alpha+d\alpha=-\hbox{pr}_1^*d\alpha +\hbox{pr}_2^*d\alpha$.}
    \]
    with $R_d=(R^1, R^2)$, is a symplectomorphism if the following equations are satisfied:
    \begin{align}\label{eq:rds1}
        \dfrac{\partial^2\alpha_i}{\partial q^j\partial q^k}\dot{q}_k
       & =\left(\dfrac{\partial\alpha_k}{\partial q^l}\circ R^1\right)\dfrac{\partial R^1_k}{\partial q^i}\dfrac{\partial R^1_l}{\partial q^j}-\left(\dfrac{\partial\alpha_k}{\partial q^l}\circ R^2 \right)\dfrac{\partial R^2_k}{\partial q^i}\dfrac{\partial R_l^2}{\partial q^j},\\
   \label{eq:rsd2}
    \dfrac{\partial\alpha_i}{\partial q^j}-\dfrac{\partial\alpha_j}{\partial q^i}
     &=\left(\dfrac{\partial\alpha_k}{\partial q^l}\circ R^1\right)\dfrac{\partial R_k^1}{\partial q^i}\dfrac{\partial R_l^1}{\partial\dot{q}^j}
        -\left(\dfrac{\partial\alpha_k}{\partial q^l}\circ R^2\right)\dfrac{\partial R_k^2}{\partial {q}^i}\dfrac{\partial R_l^2}{\partial \dot{q}^j}
      \\& - \left(\dfrac{\partial\alpha_l}{\partial q^k}\circ R^1\right)\dfrac{\partial R_l^1}{\partial q^j}\dfrac{\partial R_k^1}{\partial\dot{q}^i}
        +\left(\dfrac{\partial\alpha_l}{\partial q^k}\circ R^2\right)\dfrac{\partial R_l^2}{\partial \dot{q}^i}\dfrac{\partial R_k^2}{\partial {q}^j}, \nonumber
   \\ \label{eq:rsd4}
        0&=\left(\dfrac{\partial\alpha_k}{\partial q^l}\circ R^1\right)\dfrac{\partial R_k^1}{\partial \dot{q}^i}\dfrac{\partial R_l^1}{\partial \dot{q}^j}-\left(\dfrac{\partial\alpha_k}{\partial q^l}\circ R^2\right)\dfrac{\partial R_k^2}{\partial \dot{q}^i}\dfrac{\partial R_l^2}{\partial \dot{q}^j}.
    \end{align}
\end{proposition}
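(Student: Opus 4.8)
The plan is to compute the pullback $R_d^*(-\mathrm{pr}_1^*d\alpha + \mathrm{pr}_2^*d\alpha)$ explicitly in the coordinates $(q^i,\dot q^i)$ of $TQ$ and compare it term by term with the tangent lift $d_T d\alpha$, so that the symplectomorphism condition becomes a system of pointwise identities which will turn out to be exactly \eqref{eq:rds1}--\eqref{eq:rsd4}. First I would recall the local coordinate expression of the tangent lift: writing $d\alpha = \tfrac12\left(\tfrac{\partial\alpha_i}{\partial q^j} - \tfrac{\partial\alpha_j}{\partial q^i}\right) dq^i\wedge dq^j$ (or more simply $\tfrac{\partial\alpha_i}{\partial q^j}\,dq^i\wedge dq^j$ summed over all $i,j$), its tangent lift $d_T d\alpha$ on $TQ$ is obtained by applying the tangent-lift derivation $d_T$, which acts on functions by $d_T f = \dot q^k \partial f/\partial q^k$ (the ``total derivative'' along fibres) and on $dq^i$ by $d_T(dq^i) = d\dot q^i$. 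This produces three kinds of two-form terms on $TQ$: a $dq^i\wedge dq^j$ term whose coefficient is $\tfrac{\partial^2\alpha_i}{\partial q^j\partial q^k}\dot q^k$, a mixed $dq^i\wedge d\dot q^j$ term whose coefficient is $\tfrac{\partial\alpha_i}{\partial q^j} - \tfrac{\partial\alpha_j}{\partial q^i}$ (after antisymmetrizing), and a $d\dot q^i\wedge d\dot q^j$ term with coefficient $0$. These three coefficient families are precisely the left-hand sides of the three displayed equations.

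Next I would compute the right-hand side, namely $R_d^*\mathrm{pr}_a^*d\alpha = (R^a)^*d\alpha$ for $a=1,2$. Since $d\alpha = \tfrac{\partial\alpha_k}{\partial q^l}\,dq^k\wedge dq^l$ on $Q$, and $R^a\colon TQ\to Q$ has components $R^a_k(q,\dot q)$, the pullback is
\[
(R^a)^*d\alpha = \left(\tfrac{\partial\alpha_k}{\partial q^l}\circ R^a\right)\,dR^a_k\wedge dR^a_l,
\]
and expanding $dR^a_k = \tfrac{\partial R^a_k}{\partial q^i}dq^i + \tfrac{\partial R^a_k}{\partial\dot q^i}d\dot q^i$ splits this wedge into a $dq^i\wedge dq^j$ part, a $dq^i\wedge d\dot q^j$ part, and a $d\dot q^i\wedge d\dot q^j$ part. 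Collecting the coefficient of each of the three independent two-form types in $-(R^1)^*d\alpha + (R^2)^*d\alpha$ and equating it to the corresponding coefficient in $d_T d\alpha$ yields exactly \eqref{eq:rds1}, \eqref{eq:rsd2}, \eqref{eq:rsd4} respectively; the apparent asymmetry of \eqref{eq:rsd2} (four terms rather than two) is simply the result of antisymmetrizing the mixed term $dq^i\wedge d\dot q^j$ properly, since $i$ and $j$ play different roles there. Thus $R_d$ is a symplectomorphism precisely when these three families of identities hold, which is the claim.

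The main obstacle — really the only subtle point — is being careful with the bookkeeping of the antisymmetrization in the mixed-degree term: in $d_T d\alpha$ the two-form $dq^i\wedge d\dot q^j$ is not antisymmetric under $i\leftrightarrow j$, so one must fix a convention (say, the coefficient of $dq^i\wedge d\dot q^j$ for all ordered pairs) and make sure both sides are expressed in the same convention before equating. On the $R_d$ side this is what produces the two ``extra'' terms in \eqref{eq:rsd2}, coming from the cross terms $\tfrac{\partial R^a_k}{\partial q^i}\,dq^i \wedge \tfrac{\partial R^a_l}{\partial\dot q^j}\,d\dot q^j$ and $\tfrac{\partial R^a_l}{\partial\dot q^i}\,d\dot q^i\wedge\tfrac{\partial R^a_k}{\partial q^j}\,dq^j$ in $dR^a_k\wedge dR^a_l$. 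Everything else is a routine, if slightly lengthy, expansion of wedge products in local coordinates, and once the conventions are pinned down the three displayed equations drop out directly. I would also remark at the end that since $d_T d\alpha$ is (when $d\alpha$ is symplectic) a symplectic form on $TQ$ and $-\mathrm{pr}_1^*d\alpha + \mathrm{pr}_2^*d\alpha$ is symplectic on $Q\times Q$, and $R_d$ is a (local) diffeomorphism by Proposition~\ref{prop1}, the equality of the pulled-back forms is indeed equivalent to $R_d$ being a symplectomorphism, so no further nondegeneracy check is needed.
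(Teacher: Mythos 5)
Your proposal follows essentially the same route as the paper's proof: compute $d_Td\alpha$ in the coordinates $(q,\dot q)$, pull back $-\mathrm{pr}_1^*d\alpha+\mathrm{pr}_2^*d\alpha$ through $R_d=(R^1,R^2)$ by expanding $dR^a_k=\tfrac{\partial R^a_k}{\partial q^i}dq^i+\tfrac{\partial R^a_k}{\partial\dot q^i}d\dot q^i$, and match coefficients of the $dq\wedge dq$, $dq\wedge d\dot q$ and $d\dot q\wedge d\dot q$ blocks to obtain the three displayed families of identities. The only thing to tidy is the sign convention for $d\alpha$ (the paper uses $d\alpha=-\tfrac{\partial\alpha_i}{\partial q^j}dq^i\wedge dq^j$, consistent with $d(\alpha_i dq^i)=\tfrac{\partial\alpha_i}{\partial q^j}dq^j\wedge dq^i$), which you already flag as the bookkeeping point to pin down.
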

\begin{proof}
    The map $R_d$ is symplectic if it is a diffeomorphism and verifies the equation
    \[
    (R_d)^*(-d\alpha+d\alpha)=d_Td\alpha.
    \]
    First, we compute $d_Td\alpha$ knowing that $d\alpha=-\dfrac{\partial\alpha_i}{\partial q^j}dq^i\wedge dq^j$:
    \begin{equation}\label{eq:dTdalpha}
d_Td\alpha=\dfrac{\partial^2\alpha_i}{\partial q^j\partial q^k}\dot{q}_k dq^j\wedge dq^i+\left(\dfrac{\partial\alpha_j}{\partial q^i}-\dfrac{\partial\alpha_i}{\partial q^j}\right)dq^i\wedge d\dot{q}^j.
    \end{equation}

  The pullback $R_d^*\colon \Omega^2(Q\times Q)\rightarrow \Omega^2(TQ)$ of $2$-forms acts as follows:
    \[
    (R_d)^*(-d\alpha+d\alpha)=(R^2)^*(d\alpha)-(R^1)^*(d\alpha).
    \]
For $a=1,2$,
    \[
    (R^a)^*(d\alpha)=-\left(\dfrac{\partial\alpha_i}{\partial q^j}\circ R^a\right)d R_i^a\wedge dR_j^a.
    \]
  Thus,
\begin{equation*}
    \begin{array}{r}
         (R_d)^*(-d\alpha+d\alpha)=\left(\left(\dfrac{\partial\alpha_i}{\partial q^j}\circ R^1\right)\dfrac{\partial R^1_i}{\partial q^k}\dfrac{\partial R^1_j}{\partial q^l}-\left(\dfrac{\partial\alpha_i}{\partial q^j}\circ R^2\right) \dfrac{\partial R^2_i}{\partial q^k}\dfrac{\partial R^2_j}{\partial q^l} \right)dq^k\wedge dq^l\\
        +\left(\left(\dfrac{\partial\alpha_i}{\partial q^j}\circ R^1\right)\dfrac{\partial R^1_i}{\partial q^k}\dfrac{\partial R^1_j}{\partial\dot{q}^l}-\left(\dfrac{\partial\alpha_i}{\partial q^j}\circ R^2\right)\dfrac{\partial R^2_i}{\partial{q}^k}\dfrac{\partial R^2_j}{\partial \dot{q}^l} \right)dq^k\wedge d\dot{q}^l\\
        +\left(\left(\dfrac{\partial\alpha_i}{\partial q^j}\circ R^1\right)\dfrac{\partial R^1_i}{\partial \dot{q}^k}\dfrac{\partial R^1_j}{\partial {q}^l}-\left(\dfrac{\partial\alpha_i}{\partial q^j}\circ R^2\right) \dfrac{\partial R^2_i}{\partial\dot{q}^k}\dfrac{\partial R^2_j}{\partial{q}^l} \right){d\dot{q}^k\wedge d{q}^l}\\
        +\left(\left(\dfrac{\partial\alpha_i}{\partial q^j}\circ R^1\right)\dfrac{\partial R^1_i}{\partial \dot{q}^k}\dfrac{\partial R^1_j}{\partial \dot{q}^l}-\left(\dfrac{\partial\alpha_i}{\partial q^j}\circ R^2\right)\dfrac{\partial R^2_i}{\partial \dot{q}^k}\dfrac{\partial R^2_j}{\partial \dot{q}^l} \right)d\dot{q}^k\wedge d\dot{q}^l.
    \end{array}
\end{equation*}
    We obtain  equations \eqref{eq:rds1} to \eqref{eq:rsd4} matching  the two symplectic forms $d_Td\alpha$ and  $(R_d)^*(-d\alpha+d\alpha)$.
\end{proof}
\begin{proposition}\label{prop:midpointVortices}
The implicit discrete flow $\Phi_d: Q\rightarrow  Q$ induced by Equations~\eqref{eq:dv2} preserves the symplectic form $d\alpha$, that is,
\[
\Phi_d^*({d}\alpha)={ d}\alpha\, ,
\]
if and only if $\alpha$ has  linear components  on $Q$.
\end{proposition}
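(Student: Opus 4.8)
The plan is to recast ``$\Phi_d$ preserves $\mathrm{d}\alpha$'' as a statement about Lagrangian submanifolds and then decide it with Proposition~\ref{prop:rdsmp}.

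\emph{Set-up.} Since $\mathrm{d}\alpha$ is symplectic on $Q=\mathbb{R}^{2n}$, the constraint $\bigl(\tfrac{\partial\alpha_i}{\partial q^j}-\tfrac{\partial\alpha_j}{\partial q^i}\bigr)\dot q^{\,j}=-\tfrac{\partial H}{\partial q^i}$ defining $S_f$ determines $\dot q$ uniquely as the Hamiltonian vector field $X_H$ of $H$ relative to $\mathrm{d}\alpha$, so $S_f^{TQ}=\{(q,X_H(q))\}$; moreover equations~\eqref{eq:dv2} are uniquely solvable for $q_{k+1}$ when $h$ is small (the regularity of $L_d$), so $\Phi_d$ is a well-defined local diffeomorphism, and from the definition of $(S^h_f)_d$ its graph is $\mathrm{graph}(\Phi_d)=R_d(\Lambda_h)$, where $R_d(q,v)=(q-\tfrac12 v,\,q+\tfrac12 v)$ is the midpoint map and $\Lambda_h:=\{(q,h\,X_H(q))\}\subset TQ$. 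As $\Phi_d^*\mathrm{d}\alpha=\mathrm{d}\alpha$ is equivalent to $\mathrm{graph}(\Phi_d)$ being Lagrangian in $\bigl(Q\times Q,\,-\mathrm{pr}_1^*\mathrm{d}\alpha+\mathrm{pr}_2^*\mathrm{d}\alpha\bigr)$, and $R_d$ is a local diffeomorphism, this is equivalent to $\Lambda_h$ being Lagrangian on $TQ$ for the pulled-back form $R_d^*(-\mathrm{d}\alpha+\mathrm{d}\alpha)=(R^2)^*\mathrm{d}\alpha-(R^1)^*\mathrm{d}\alpha$.

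\emph{A Hamiltonian-independent identity.} Next I would use the fact that the graph of \emph{any} vector field $Y$ on $Q$ is Lagrangian in $(TQ,\mathrm{d}_T\mathrm{d}\alpha)$ if and only if $\mathcal{L}_Y\mathrm{d}\alpha=0$: a short coordinate computation with~\eqref{eq:dTdalpha} shows that the pullback of $\mathrm{d}_T\mathrm{d}\alpha$ by $q\mapsto(q,Y(q))$ equals $\mathcal{L}_Y\mathrm{d}\alpha$, and $\mathrm{graph}(Y)$ is half-dimensional. For $Y=h\,X_H$ one has $\mathcal{L}_{X_H}\mathrm{d}\alpha=\mathrm{d}(i_{X_H}\mathrm{d}\alpha)=0$, so $\Lambda_h$ is \emph{always} Lagrangian for $\mathrm{d}_T\mathrm{d}\alpha$. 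Writing $\Delta:=(R^2)^*\mathrm{d}\alpha-(R^1)^*\mathrm{d}\alpha-\mathrm{d}_T\mathrm{d}\alpha$ on $TQ$ and $j_h\colon\Lambda_h\hookrightarrow TQ$, the previous paragraph then gives
\[
\Phi_d^*\mathrm{d}\alpha-\mathrm{d}\alpha \;=\; j_h^*\Delta ,
\]
so $\Phi_d$ is symplectic whenever $\Delta=0$, i.e.\ whenever $R_d\colon(TQ,\mathrm{d}_T\mathrm{d}\alpha)\to(Q\times Q,-\mathrm{d}\alpha+\mathrm{d}\alpha)$ is a symplectomorphism, and by Proposition~\ref{prop:rdsmp} the latter is governed by equations~\eqref{eq:rds1}, \eqref{eq:rsd2} and~\eqref{eq:rsd4}.

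\emph{Sufficiency.} For the midpoint map $\partial R^a/\partial q=\mathrm{id}$, $\partial R^1/\partial\dot q=-\tfrac12\mathrm{id}$, $\partial R^2/\partial\dot q=\tfrac12\mathrm{id}$. If $\alpha$ has linear components then $\partial^2\alpha\equiv 0$ and each $\partial\alpha_i/\partial q^j$ is constant, hence invariant under composition with $R^1,R^2$; feeding this into~\eqref{eq:rds1}, \eqref{eq:rsd2}, \eqref{eq:rsd4} each reduces to a trivial identity, equivalently a one-line computation yields $(R^2)^*\mathrm{d}\alpha-(R^1)^*\mathrm{d}\alpha=\bigl(\tfrac{\partial\alpha_j}{\partial q^i}-\tfrac{\partial\alpha_i}{\partial q^j}\bigr)\mathrm{d}q^i\wedge\mathrm{d}\dot q^j=\mathrm{d}_T\mathrm{d}\alpha$. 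Hence $\Delta=0$ and $\Phi_d^*\mathrm{d}\alpha=\mathrm{d}\alpha$; invariantly, $R_d$ carries the Lagrangian $\Lambda_h$ onto the Lagrangian $\mathrm{graph}(\Phi_d)$.

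\emph{Necessity.} This is the delicate direction. For the midpoint map, equation~\eqref{eq:rsd4} reads $\tfrac14\bigl(\tfrac{\partial\alpha_i}{\partial q^j}(q-\tfrac12 v)-\tfrac{\partial\alpha_i}{\partial q^j}(q+\tfrac12 v)\bigr)=0$, which holds for all $(q,v)$ exactly when every $\partial\alpha_i/\partial q^j$ is constant, so nonlinearity of $\alpha$ is precisely the obstruction to $R_d$ being a symplectomorphism. To promote this to a statement about $\Phi_d$, I would substitute $v=h\,X_H$ and $\mathrm{d}\dot q^i=h\,\partial_j X_H^i\,\mathrm{d}q^j$ into $\Delta$ and check that the contributions of order $\le h^2$ cancel (the order-$h$ term being absorbed by $\mathrm{d}_T\mathrm{d}\alpha$ through the symmetry of second partials, in accordance with the midpoint rule having order $2$ and the exact flow preserving $\mathrm{d}\alpha$), so that $j_h^*\Delta=O(h^3)$ with leading coefficient an explicit $2$-form built from the derivatives of $\mathrm{d}\alpha$ contracted with $X_H$ and $\nabla X_H$. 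Requiring this coefficient to vanish for every Hamiltonian $H$ (for which $X_H$ and $\nabla X_H$ may be prescribed independently at a point), or already for the point-vortex Hamiltonian whose flow is nondegenerate enough, forces $\tfrac{\partial\alpha_i}{\partial q^j}-\tfrac{\partial\alpha_j}{\partial q^i}$ to be constant, i.e.\ $\alpha$ to have linear components. The main obstacle is exactly this last step: showing that Lagrangianity of the whole family $\{\Lambda_h\}_H$ for $(R^2)^*\mathrm{d}\alpha-(R^1)^*\mathrm{d}\alpha$ already forces that two-form to coincide with $\mathrm{d}_T\mathrm{d}\alpha$ --- that is, that the necessity half of Proposition~\ref{prop:rdsmp} propagates from $R_d$ to the induced flow $\Phi_d$.
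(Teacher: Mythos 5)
Your sufficiency argument is essentially the paper's own proof: evaluate $(R_d)^*(-\mathrm{d}\alpha+\mathrm{d}\alpha)$ for the midpoint map, observe that linearity of $\alpha$ kills the $\mathrm{d}q\wedge\mathrm{d}q$ and $\mathrm{d}\dot q\wedge\mathrm{d}\dot q$ blocks and makes the cross terms reproduce $\mathrm{d}_T\mathrm{d}\alpha$ as in Equation~\eqref{eq:dTdalpha}, so $R_d$ is a symplectomorphism, and then transfer Lagrangianity of $S_f$ in $(TQ,\mathrm{d}_T\mathrm{d}\alpha)$ to Lagrangianity of its image in $(Q\times Q,-\mathrm{d}\alpha+\mathrm{d}\alpha)$. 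You are in fact more explicit than the paper at the final step: the paper merely asserts that $S_f$ is Lagrangian for $\mathrm{d}_T\mathrm{d}\alpha$, whereas you justify it via the identity ``pullback of $\mathrm{d}_T\omega$ by a section $Y$ equals $\mathcal{L}_Y\omega$'' together with $\mathcal{L}_{X_H}\mathrm{d}\alpha=0$. Your identity $\Phi_d^*\mathrm{d}\alpha-\mathrm{d}\alpha=j_h^*\Delta$ is correct up to the reparametrization $q\mapsto q_k=q-\tfrac h2\,hX_H(q)/h$, a diffeomorphism for small $h$, so this direction is sound.

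On necessity you have put your finger on a genuine issue, and you should know that the paper's proof does not resolve it either: the published argument establishes only the ``if'' direction. Showing that a nonlinear $\alpha$ violates Equation~\eqref{eq:rsd4} proves that $R_d$ fails to be a symplectomorphism of all of $TQ$, but symplecticity of $\Phi_d$ only requires $\Delta$ to restrict to zero on the single Lagrangian submanifold $\Lambda_h$, so the implication does not reverse without further work --- precisely the obstacle you name at the end. Your proposed remedy (expand $j_h^*\Delta$ in powers of $h$ and show the leading coefficient cannot vanish unless each $\partial\alpha_i/\partial q^j$ is constant) is a plausible plan but remains a sketch: the quantifier matters (vanishing for one fixed $H$ versus for all $H$), and note that for the point-vortex form $\alpha$ of Equation~\eqref{eq:alpha} the components are already linear, so the ``only if'' half is vacuous in the motivating example and must be argued for general $\alpha$. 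If you want the full equivalence you will have to carry that expansion through; comparing against the paper will not close this gap for you.
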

\begin{proof}
Since
$R_d(q, \dot{q})=(q-\tfrac{1}{2}\dot{q}, q+\tfrac{1}{2}\dot{q})$, the last equation in the proof of Proposition~\ref{prop:rdsmp} becomes
\begin{equation*}
    \begin{array}{r}
         (R_d)^*(-d\alpha+d\alpha)=\left(\left(\dfrac{\partial\alpha_i}{\partial q^j}\circ R^1\right)-\left(\dfrac{\partial\alpha_i}{\partial q^j}\circ R^2\right)  \right)dq^i\wedge dq^j\\
 -\dfrac{1}{2}\left(\left(\dfrac{\partial\alpha_i}{\partial q^j}\circ R^1\right)+\left(\dfrac{\partial\alpha_i}{\partial q^j}\circ R^2\right)\right)dq^i\wedge d\dot{q}^j\\
        -\dfrac{1}{2}\left(\left(\dfrac{\partial\alpha_i}{\partial q^j}\circ R^1\right)+\left(\dfrac{\partial\alpha_i}{\partial q^j}\circ R^2\right)  \right){d\dot{q}^i\wedge d{q}^j}\\     +\dfrac{1}{4}\left(\left(\dfrac{\partial\alpha_i}{\partial q^j}\circ R^1\right)-\left(\dfrac{\partial\alpha_i}{\partial q^j}\circ R^2\right) \right)d\dot{q}^i\wedge d\dot{q}^j.
    \end{array}   
\end{equation*}
Under the assumption of linearity of $\alpha$, that is, $\alpha=\alpha_{ij}q^j{\rm d}q^i$, we have 
\begin{align*}
       (R_d)^*(-d\alpha+d\alpha)&=
 -\alpha_{ij}\,dq^i\wedge d\dot{q}^j
-\alpha_{ij}{d\dot{q}^i\wedge d{q}^j}\\
&=(\alpha_{ji}-\alpha_{ij})dq^i\wedge d\dot{q}^j\\
&=d_Td\alpha\, ,
    \end{align*}   
    because of Equation~\eqref{eq:dTdalpha}. Thus, $R_d$ is a symplectomorphism.
As $S_f$ is a Lagrangian submanifold of $(TQ, d_Td\alpha)$ and $(S_f^{h})_d$ is also a Lagrangian submanifold of $(Q\times Q, -d\alpha+d\alpha)$, the discrete flow on $Q$ preserves the symplectic form $d\alpha$.

\end{proof}

\begin{remark}
    Observe that for discretization maps  of the type $R_d(q,\dot{q})=(q-\theta\dot{q},q+(1-\theta)\dot{q})$, where $\theta\in [0,1]$,  the unique case when $R_d$  is a symplectomorphism is for $\theta=\dfrac{1}{2}$. \\
    We start computing
\begin{equation*}
    \begin{array}{r}
         (R_d)^*(-d\alpha+d\alpha)=\left(\left(\dfrac{\partial\alpha_i}{\partial q^j}\circ R^1\right)-\left(\dfrac{\partial\alpha_i}{\partial q^j}\circ R^2\right)  \right)dq^i\wedge dq^j\\
 \left(-\theta\left(\dfrac{\partial\alpha_i}{\partial q^j}\circ R^1\right)-(1-\theta)\left(\dfrac{\partial\alpha_i}{\partial q^j}\circ R^2\right)\right)dq^i\wedge d\dot{q}^j\\
        \left(-\theta\left(\dfrac{\partial\alpha_i}{\partial q^j}\circ R^1\right)-(1-\theta)\left(\dfrac{\partial\alpha_i}{\partial q^j}\circ R^2\right)  \right){d\dot{q}^i\wedge d{q}^j}\\
        \left(\theta^2\left(\dfrac{\partial\alpha_i}{\partial q^j}\circ R^1\right)-(1-\theta)^2\left(\dfrac{\partial\alpha_i}{\partial q^j}\circ R^2\right) \right)d\dot{q}^i\wedge d\dot{q}^j.
    \end{array}
\end{equation*}
Under the hypothesis of $\alpha$ we need $\theta$ to verify
$
    \theta^2=(1-\theta)^2$ and this implies that $ \theta=\dfrac{1}{2}.
$
\end{remark}

\subsection{Numerical simulations}\label{Sec:numerical}

In this section, we present several numerical experiments in order to gain a deeper understanding of the behavior of the above mentioned Methods 1 and 2.

We first simulate a system of four point vortices, following the initial conditions described in \cite{rowley-marsden} and provided in the following table:
\[
\begin{array}{c|cccc}
    j & 1 &2&3&4 \\
     \hline
     x^j&-1&1&-1&1\\
     y^j&2&2&-2&-2\\
     \Gamma_j&1&1&-1&-1
\end{array}
\]

We fix the timestep to $h=1.0$ and compute 300 steps to visualize the trajectories. We compare the two symplectic methods with the non-symplectic Runge-Kutta 2 integrator. The RK2 method is also used to compute the first step of the other two methods, because they are not self-starting.

\begin{figure}[h]
    \centering
    \includegraphics[scale=0.4]{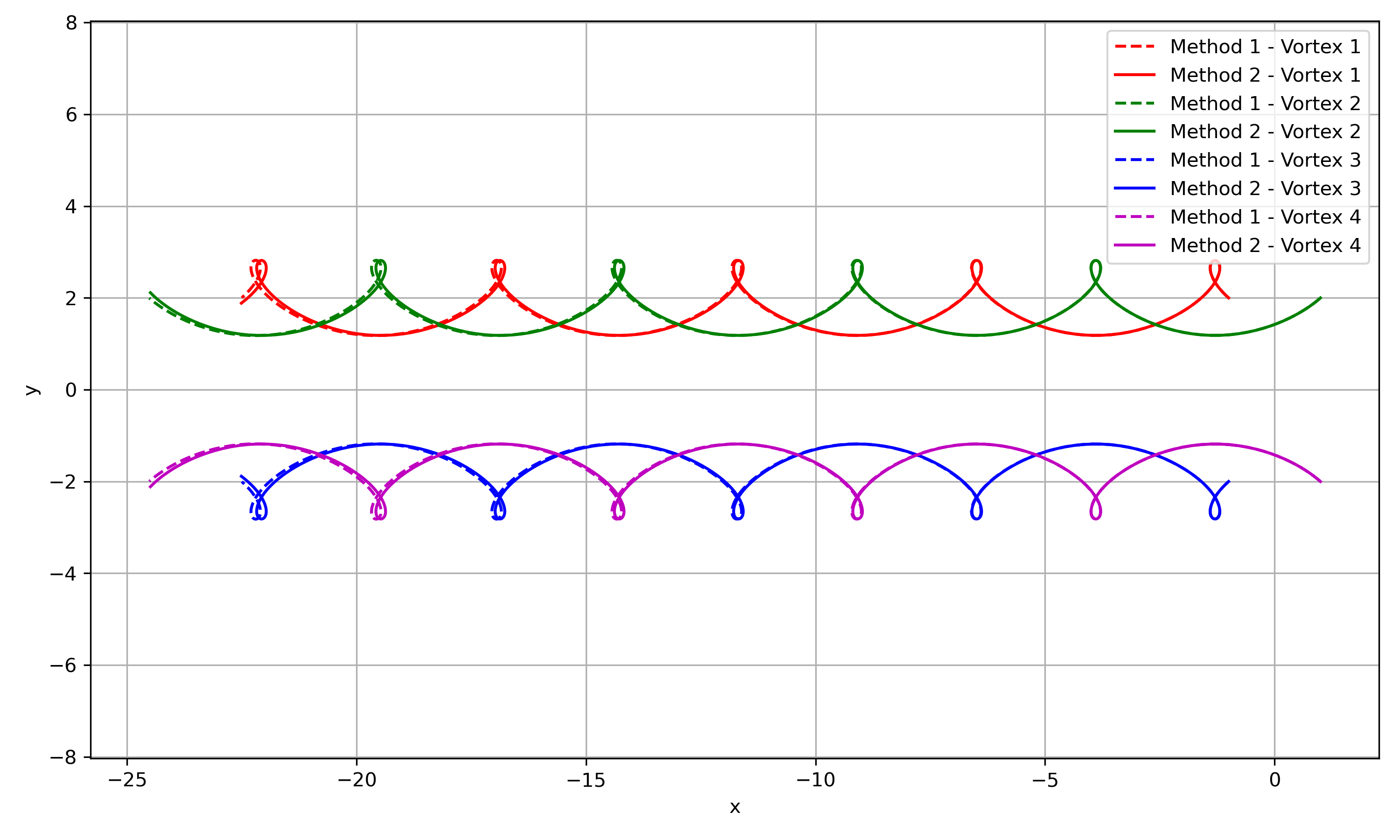}
    \caption{Trajectories of four point vortices obtained with the three numerical methods.}
    \label{fig:frogstep}
\end{figure}

As shown in Figure~\ref{fig:frogstep}, the initial configuration is symmetric with respect to the line $y=0$. The behavior of trajectories is similar under the three numerical methods, with the two pairs of vortices leapfrogging past each other. \\

We analyze energy conservation for both methods by computing the quantity $H(t)-H(0)$ for time $0\leq t\leq 10^6$, see~\eqref{eq:h}. 

\begin{figure}[h]
\centering
\begin{minipage}{0.4\textwidth}
\includegraphics[scale=0.4]{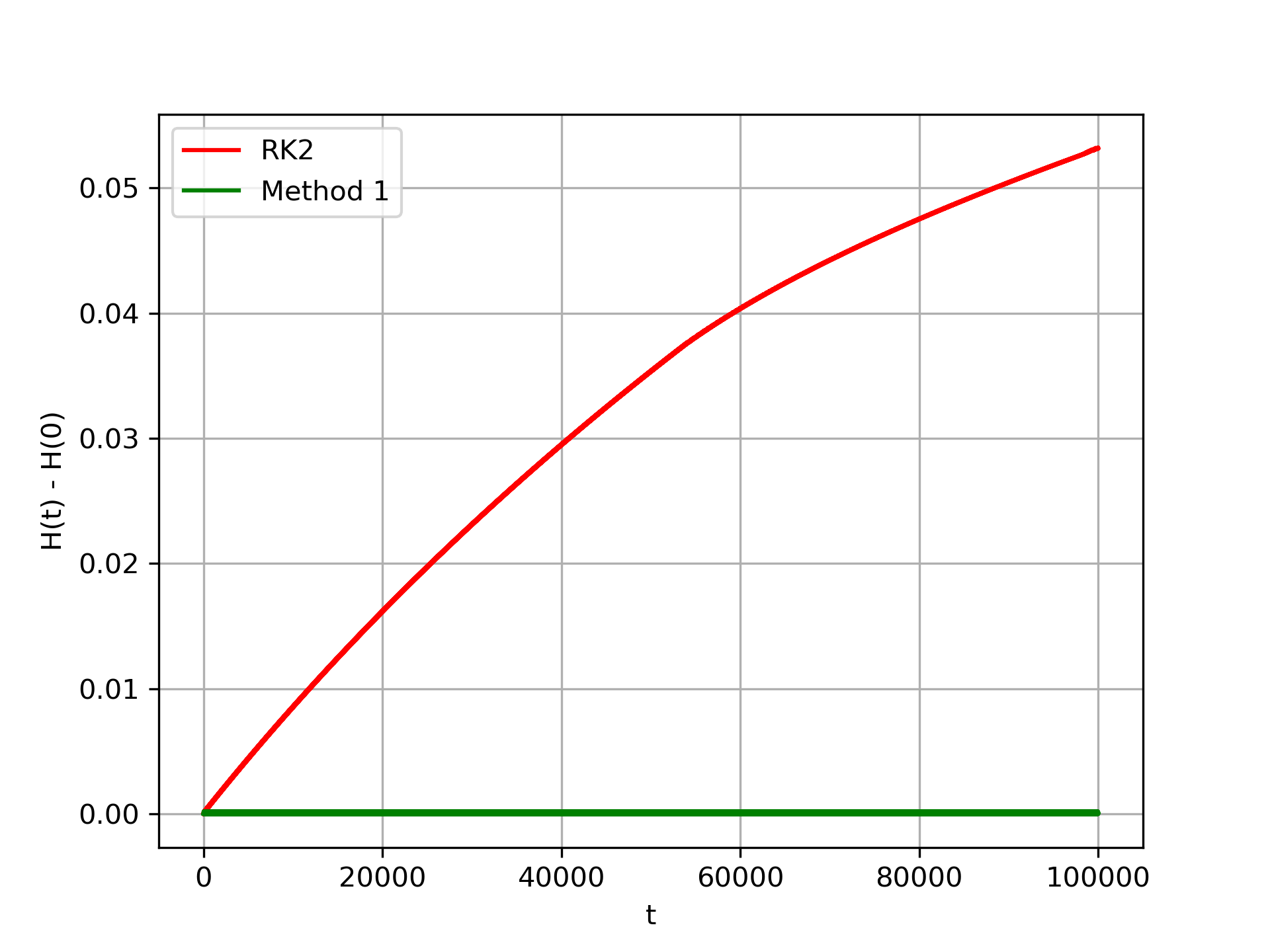}
\end{minipage}\hfill{}\begin{minipage}{0.4\textwidth}
\includegraphics[scale=0.4]{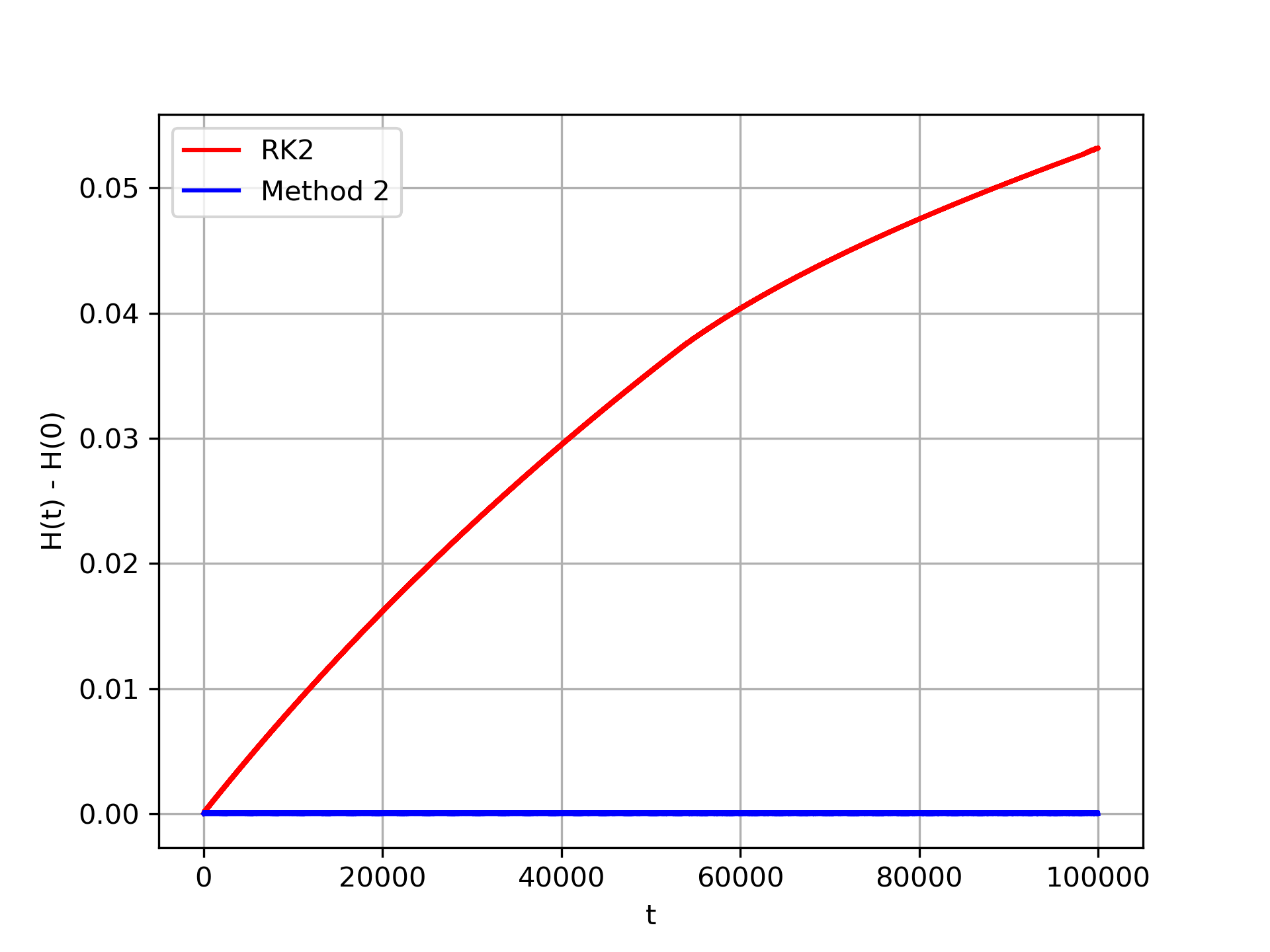}
\end{minipage}
    \caption{Comparison of energy conservation between each of the symplectic method and  RK2.}
    \label{fig:crkm1m2}
\end{figure}

In Figure~\ref{fig:crkm1m2} we can see that RK2 method exhibits a gradual drift, while the symplectic schemes maintain the Hamiltonian close to its initial value at all times.

To compare the two symplectic methods in the paper more clearly, we zoom in on their performance in Figure~\ref{fig:compboth} and increase the number of steps to 500.
The results show that their numerical behavior is similar, although Method 2 shows slightly better accuracy in the performed simulations.

\begin{figure}[h]
    \centering
    \includegraphics[width=0.6\linewidth]{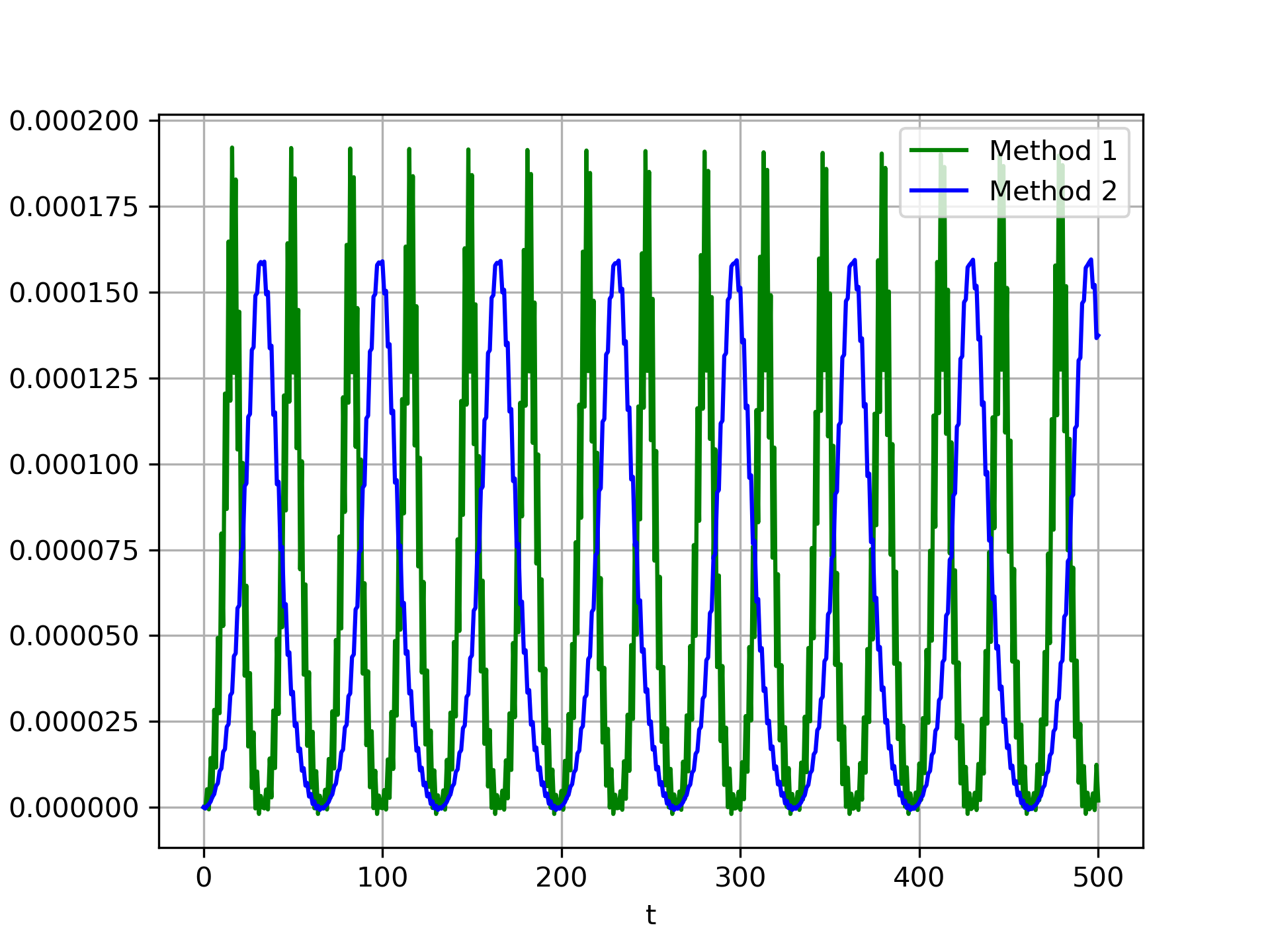}
    \caption{Energy conservation of the symplectic Methods 1 and 2.}
    \label{fig:compboth}
\end{figure}

\section{Application to open and closed port-Hamiltonian systems}\label{Sec:PortHamil}

A  port-Hamiltonian system is specified by a $n$-dimensional configuration space $M$, the spaces of \emph{flows} (inputs), $U=\R^m$, and \emph{efforts} (outputs), $Y=(\R^m)^*=\R^m$, together with the following set of equations in local coordinates $(x,u,y)$ for $M\times U \times Y$:
	\begin{equation}\label{sy:pH}
		\left\{
		\begin{array}{lcl}
			\dot{x}&=&J(x)e+B(x)u,\quad e=dH(x)\, ,\\
			y&=&B^T(x)e\, ,
		\end{array}
		\right.
	\end{equation}
     where $H:M\to\R$, $J(x)$ is a bivector in $\Lambda^2 (T^*M)$  and $B: M\times \R^m\rightarrow TM$ is a vector bundle map over $M$, that is, $\hbox{pr}_1=\tau_{M}\circ B$, with dual vector bundle map $B^T: T^*M\rightarrow M\times \R^m$ over $M$ and we denote its restriction to $x\in M$ by $B^T(x):T_x^*M\to\R^m$. We will use the notation $B(x, u)=B(x) u\in T_xM$ and $B^T(x,\alpha)=B^T(x) \alpha \in (\R^m)^*=\R^m$. 

In geometric terms, the bivector $J$ defines the following Dirac structure $D\subset TM\oplus T^*M$: \begin{equation}
	D:=\{(v,\alpha)\in TM\oplus T^*M\; | \; v=J\alpha\}.\label{set:d}
\end{equation} 
The equations of a port-Hamiltonian system define the following set 
\begin{equation}
	D_{\D,B}:=\{(v,\alpha)\in TM\oplus T^*M\; | \;\forall \; u\in U,\ v-B(x)u=J(x)\alpha\}\, . \label{set:sa}
\end{equation}
An interesting subset of $D_{\D,B}$ is the following one:
\begin{equation}
	D_{\D,B}^{(c)}:=\{(v,\alpha)\in TM\oplus T^*M\; |\;\exists \; u\in U,\ v-B(x)u=J(x)\alpha,\ B^T(x)\alpha=0\}.\label{set:sc}
\end{equation}
Such a port-Hamiltonian system is obtained from closing the ports~\cite{cendra2}.

\begin{proposition} The set $D_{\D,B}$ in Equation~\eqref{set:sa} is coisotropic, while the set $D_{\D,B}^{(c)}$ in Equation~\eqref{set:sc} is a Dirac structure.
\end{proposition}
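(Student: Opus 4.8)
The plan is to work pointwise, fixing $x \in M$, and analyze the two linear subspaces of $T_xM \oplus T_x^*M$ using the pairing $\ll\,,\,\gg$. Write $V = T_xM$, $n = \dim V$, and let $J = J(x)$, $B = B(x)\colon \R^m \to V$, $B^T = B^T(x)\colon V^* \to \R^m$, so $\Img B \subseteq V$ is a subspace. The key observation to extract first is that $D_{\D,B}(x) = \{(v,\alpha) : v - J\alpha \in \Img B\}$, i.e. it is the preimage of $\Img B$ under the linear map $(v,\alpha)\mapsto v - J\alpha$. From this, $\dim D_{\D,B}(x) = n + \rank B$ (the map $(v,\alpha)\mapsto v - J\alpha$ is surjective onto $V$, so the preimage of a subspace of dimension $\rank B$ has dimension $2n - (n - \rank B) = n + \rank B$). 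Since a coisotropic subspace is exactly one of dimension $\geq n$ that contains its own orthogonal, and since any subspace $U$ satisfies $\dim U + \dim U^\perp = 2n$, it suffices to show $D_{\D,B}(x)^\perp \subseteq D_{\D,B}(x)$; the dimension count $\dim D_{\D,B}(x)^\perp = n - \rank B \le n + \rank B$ is then automatic and consistent.

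For the coisotropy, I would compute $D_{\D,B}(x)^\perp$ directly. Note $D_{\D,B}(x)$ contains the large subspace $W := \{(v,0) : v \in V\} \oplus \{0\}$... more carefully, it contains $\{(v,0): v\in V\}$ since for $\alpha = 0$ we need $v \in \Img B$ — no, that's not all of $V$. Let me instead use: $(v,\alpha) \in D_{\D,B}(x)^\perp$ iff $\langle \beta, v\rangle + \langle \alpha, w\rangle = 0$ for all $(w,\beta)$ with $w - J\beta \in \Img B$. Taking $\beta = 0$ and $w$ ranging over $\Img B$ forces $\alpha \in (\Img B)^\circ$. Taking $w = J\beta$ (allowed, as $J\beta - J\beta = 0 \in \Img B$) and $\beta$ arbitrary forces $\langle \beta, v\rangle + \langle \alpha, J\beta\rangle = 0$ for all $\beta$, i.e. $v = -J^T\alpha = J\alpha$ using skew-symmetry of $J$ (as a bivector, $\sharp_J = -\sharp_J^T$ in the appropriate sense). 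Hence $D_{\D,B}(x)^\perp = \{(J\alpha, \alpha) : \alpha \in (\Img B)^\circ\}$, and every such element manifestly lies in $D_{\D,B}(x)$ (take $u = 0$). This proves coisotropy.

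For $D_{\D,B}^{(c)}(x)$, rewrite it as $\{(v,\alpha) : \alpha \in \ker B^T,\ v - J\alpha \in \Img B\}$. I would first check $\dim D_{\D,B}^{(c)}(x) = n$: the constraint $\alpha \in \ker B^T$ cuts down the $\alpha$-freedom to $n - \rank B$ dimensions, and for each such $\alpha$ the set of valid $v$ is the coset $J\alpha + \Img B$, of dimension $\rank B$; total $n - \rank B + \rank B = n$. Then show it is isotropic: for $(v_1,\alpha_1), (v_2,\alpha_2)$ in the set, write $v_i = J\alpha_i + Bu_i$, and compute $\ll(v_1,\alpha_1),(v_2,\alpha_2)\gg = \langle \alpha_1, v_2\rangle + \langle \alpha_2, v_1\rangle = \langle \alpha_1, J\alpha_2\rangle + \langle \alpha_1, Bu_2\rangle + \langle \alpha_2, J\alpha_1\rangle + \langle \alpha_2, Bu_1\rangle$. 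The two $J$-terms cancel by skew-symmetry; the terms $\langle \alpha_i, Bu_j\rangle = \langle B^T\alpha_i, u_j\rangle = 0$ since $\alpha_i \in \ker B^T$. So the pairing vanishes, $D_{\D,B}^{(c)}(x)$ is isotropic of dimension $n$, hence maximally isotropic, hence a linear Dirac structure; doing this at every $x$ (smoothly, since all the maps are smooth bundle maps of constant rank — here one may need to assume $B$ has constant rank, which is implicit in "vector bundle map") gives a Dirac structure on $M$.

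I expect the main obstacle to be purely bookkeeping: getting the sign and transpose conventions right for the bivector $J$ — specifically verifying that $\langle \alpha_1, J\alpha_2\rangle + \langle \alpha_2, J\alpha_1\rangle = 0$, which is precisely the skew-symmetry of $J$ as an element of $\Lambda^2(T^*M)$ acting via $\sharp_J$, exactly as in Example~\ref{Example:Dirac}(3) and Theorem~\ref{Thm:DiracStructMDual}. A secondary subtlety is that the statement as phrased speaks of subsets of $TM \oplus T^*M$ over all of $M$; one should remark that "Dirac structure" and "coisotropic" are meant fiberwise, and that constant-rank hypotheses on $B$ (and hence regularity of $\ker B^T$, $\Img B$) are what guarantee these fiberwise objects assemble into subbundles, so that Theorem~\ref{Thm:DiracStructMDual} — or rather its proof technique — applies.
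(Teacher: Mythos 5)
Your argument is correct, and for the first half it follows essentially the same route as the paper: compute $(D_{\D,B})_x^\perp$ from the pairing and check that it sits inside $(D_{\D,B})_x$. (You are in fact slightly more careful: you record both necessary conditions, $v=J\alpha$ \emph{and} $\alpha\in(\Img B)^\circ$, whereas the paper only extracts $v=J\alpha$, which already suffices for the inclusion $D_{\D,B}^\perp\subseteq D_{\D,B}$. You also silently read the quantifier in \eqref{set:sa} as ``$\exists\, u$'' rather than the literal ``$\forall\, u$''; that is the reading the paper's own proof uses, so no harm done.) Where you genuinely diverge is in the second half. Both proofs establish isotropy of $D^{(c)}_{\D,B}$ by the same cancellation (the $J$-terms by skew-symmetry, the $B$-terms because $B^T\alpha_i=0$), but to upgrade isotropy to $D=D^\perp$ the paper argues that $D^{(c)}_{\D,B}$ is coisotropic because it is contained in the coisotropic set $D_{\D,B}$ --- a principle that is false in general (any isotropic subspace is contained in some coisotropic one, e.g.\ the whole space), so that step as written needs repair. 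Your dimension count $\dim D^{(c)}_{\D,B}(x)=(n-\rank B)+\rank B=n$, giving maximal isotropy directly, is the cleaner and fully rigorous way to close the argument; the price is the constant-rank hypothesis on $B$ that you correctly flag at the end and that the paper leaves implicit.
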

\begin{proof}
    Consider first the set $D_{\D,B}$. For every element $(v_1,\alpha_1)\in D_{\D,B}$, the elements $(v_2,\alpha_2)$ in the orthogonal complement $(D_{\D,B})_x^\perp$ must satisfy the equality:
    \begin{align*}
        \ll(v_1,\alpha_1),(v_2,\alpha_2)\gg&=\langle\alpha_1,v_2\rangle+\langle\alpha_2,v_1\rangle\\
&=\langle\alpha_1,v_2\rangle+\langle\alpha_2,B(x)u\rangle+\langle\alpha_2,J(x)\alpha_1\rangle=0,\quad \forall \; u\in U.
    \end{align*}
    In particular, for $u=0$ we get the equation
    \[
    \langle\alpha_1,v_2-J(x)\alpha_2\rangle=0 \quad \forall \; (v_1,\alpha_1)\in D_{\D,B} \Leftrightarrow v_2-J(x)\alpha_2=0.
    \]
    Thus,
    \[
    D_{\D,B}^\perp=\{(v_2,\alpha_2)\in TM\oplus T^*M\; |\; \ v_2=J(x)\alpha_2\}\subset D_{\D,B},
    \]
    and is coisotropic.

    Since $D_{\D,B}^{(c)}\subseteq D_{\D,B}$, it is known that $D_{\D,B}^{(c)}$ is also coisotropic. Let us compute the pairing of any two elements  $(v_1,\alpha_1)$,  $(v_2,\alpha_2)$ in $\left(D_{\D,B}^{(c)}\right)_x$: 
    \begin{align*}
    \ll(v_1,\alpha_1),(v_2,\alpha_2)\gg&=\langle\alpha_1,v_2\rangle+\langle\alpha_2,v_1\rangle\\
    &=\langle \alpha_1, J(x)\alpha_2+B(x)u_2\rangle + \langle \alpha_2, J(x)\alpha_1+B(x)u_1\rangle\\
    &=\langle \alpha_1, B(x)u_2\rangle + \langle \alpha_2, B(x)u_1\rangle\\
&=\langle B^T(x)\alpha_1, u_2\rangle + \langle B^T(x)\alpha_2, u_1\rangle=0
    \end{align*}
    Therefore, $D_{\D,B}^{(c)}$ is a Dirac structure. 
\end{proof}

A Hamiltonian function $H\colon M\rightarrow \mathbb{R}$ together with the coisotropic structure $D_{\D, B}$ define the following coisotropic system, known in the literature as an open port-Hamiltonian system,
\begin{equation}\label{sy:opH}
		\left\{
		\begin{array}{lcl}
			\dot{x}&=&J(x) d H(x) +B(x)u\, ,\\
			y&=&B^T(x)dH\, ,
		\end{array}
		\right.
	\end{equation}
or, equivalently,
\begin{equation} \label{eq:IntrinsicOpPH}
\dot{x}\oplus dH(x)\in  (D_{\D, B})_x, \qquad y=B^T(x)(dH(x))\, ,
\end{equation}

On the other hand, a Hamiltonian function $H$ together with the Dirac structure $D_{\D,B}^{(c)}$ define the following Dirac system, also known in the literature as a closed port-Hamiltonian system:
\begin{equation}\label{sy:cpH}
		\left\{
		\begin{array}{lcl}
			\dot{x}&=&J(x) d H(x) +B(x)u\, ,\\
			0&=&B^T(x)dH(x)\, ,
		\end{array}
		\right.
	\end{equation}
or, equivalently,
\begin{equation} \label{eq:IntrinsicClosePH}
\dot{x}\oplus dH(x)\in  (D_{\D, B}^{(c)})_x \; .
\end{equation}

\subsection{Discretization}\label{section:directdiscretization}

A discretization map $R_d:TM\to M\times M$ is used to obtain numerical integrators for the port-Hamiltonian systems mentioned above taking into account the continuous dynamics. Let $\overline{x}=\tau_M(R_d^{-1}(x_k,x_{k+1}))$. The coisotropic or open port-Hamiltonian system in~\eqref{eq:IntrinsicOpPH} is discretized for a small step size $h>0$ as follows (see \cite{KOTYCZKA2019104530}):
\begin{equation} \label{eq:DiscreteIntrinsicOpPH}
\left( \dfrac{1}{h}\, R_d^{-1}(x_k,x_{k+1}) \right) \oplus dH(\overline{x})\in  (D_{\D, B})_{\overline{x}}, \qquad y_{\overline{x}}=B^T(\overline{x})(dH(\overline{x}))\, .
\end{equation}
Equivalently,
\begin{equation}\label{sy:Ded}
    \left\{
    \begin{array}{rcl}
     R_d^{-1}(x_k,x_{k+1})-hB(\x)u_{\x}&=&hJ(\x)dH(\x),\\
y_{\overline{x}}&=&B^T(\x)dH(\x).
    \end{array}
    \right.
\end{equation}
Observe that $u_{\x}$ and $y_{\x}$ represent, respectively,  the discrete flow and discrete efforts associated to this discretization.  

Moreover,
\[
h\langle y_{\bar{x}}, u_{\bar{x}}\rangle =\langle dH (\bar{x}), R_d^{-1}(x_k,x_{k+1})\rangle  \,.
\]

On the other hand, a closed port-Hamiltonian system~\eqref{eq:IntrinsicClosePH} is discretized as follows for a small step size $h>0$:
\begin{equation} \label{eq:DiscreteIntrinsicClosePH}
\left( \dfrac{1}{h}\, R_d^{-1}(x_k,x_{k+1}) \right) \oplus dH(\overline{x}) \in  (D_{\D, B}^{(c)})_{\overline{x}} \; .
\end{equation}
Equivalently,
\begin{equation}\label{sy:Ded-closed}
    \left\{
    \begin{array}{rcl}
     R_d^{-1}(x_k,x_{k+1})-hB(\x)u_{\x}&=&hJ(\x)dH(\x),\\
0&=&B^T(\x)dH(\x).
    \end{array}
    \right.
\end{equation}
For Dirac structures, $\langle y_{\bar{x}}, u_{\bar{x}}\rangle =0$. Thus, 
\[
\langle dH (\bar{x}), R_d^{-1}(x_k,x_{k+1})\rangle=0
\]
which is not equal  to $H(x_{k+1})-H(x_k)$. For guaranteeing exact preservation of the energy along the discrete trajectory it is convenient  to use discrete gradient methods (see \cite{Celledoni-Hoset}).
\begin{remark}
Observe that a closed port-Hamiltonian system can be alternatively rewritten as 
\begin{equation}\label{sy:cpH-1}
			\begin{pmatrix} I\\
        0\end{pmatrix}\dot{x}=
        \begin{pmatrix}J(x) d H(x) +B(x)u\\
			B^T(x)dH(x)
		\end{pmatrix} \, ,
	\end{equation}
where $I$ is the identity matrix. Such a system is  a particular case  of an implicit differential system where it is necessary to apply a constraint algorithm to guarantee the consistency of the solutions of these equations. 
\end{remark}

\subsection{Method 2 for closed port-Hamiltonian systems}

The constraint algorithm can also be applied to a closed port-Hamiltonian system on $M$ as in Equation~\eqref{sy:cpH-1}:
\begin{align*}
\dot{x}&=J(x)dH(x)+B(x)u_{x},\\
0&=B^T(x)dH(x)\,.
\end{align*}
These equations determine the starting submanifold $S_0$ of $TM$ and 
\[
M_0=\tau_M(S_0)=\{x\in M\; |\; B^T(x)dH(x)=0\}\, .
\]
The first step of the algorithm consists of finding the subset $S_1\subseteq TM_0$ given by
\begin{align*}
S_1=S_0\cap TM_0&=\{(x, \dot{x})\; |\; \exists \; u_x\in U \mbox{ such that }
\dot{x}=J(x)dH(x)+B(x)u_{x},\\
&
0=B^T(x)dH(x), \quad \langle d\left(B^T(x)dH(x)\right), \dot{x}\rangle=0\} \, . 
\end{align*}
If we try to discretize the dynamics encoded in $S_1$ as in Section~\ref{section:directdiscretization}, the main difficulty is to find a discretization map on $M_0$. 
It could be constructed by defining a projector $P: M\rightarrow M_0$ from $M$ to $M_0$ such that $P(x)=x$ for any $x\in M_0$ as described in the following diagram:
\begin{center}
    \begin{tikzcd}
        TM \arrow[r,"R_d"] & M \times M \arrow[d,"P\times P"] \\
        TM_0\arrow[u,"Ti_{M_0}"] \arrow[r,"R_d^{M_0}"] & M_0 \times M_0 \, ,
    \end{tikzcd}
\end{center}
where $i_{M_0}: M_0 \hookrightarrow M$ is the inclusion.

\begin{proposition} \label{Prop:RdM0} If $R_d: TM\rightarrow M\times M$ is a discretization map, then 
the mapping $R_d^{M_0}: TM_0 \to M_0 \times M_0$ defined as $R_d^{M_0}:=(P \times P) \circ R_d \circ Ti_{M_0}$ is also a discretization map.
\end{proposition}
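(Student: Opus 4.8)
The plan is to verify the two defining properties (D1) and (D2) of a discretization map directly for $R_d^{M_0} = (P \times P)\circ R_d \circ Ti_{M_0}$, using the corresponding properties of $R_d$ together with the fact that $P$ is a retraction-type projector onto $M_0$ (i.e. $P|_{M_0} = \mathrm{id}_{M_0}$).

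First I would check (D1). Take $x \in M_0$ and consider the zero vector $0_x \in T_xM_0$. Since $i_{M_0}$ is an inclusion, $Ti_{M_0}(0_x) = 0_x \in T_xM \subset TM$. By (D1) for $R_d$ we have $R_d(0_x) = (x,x) \in M\times M$. Then $(P\times P)(x,x) = (P(x), P(x)) = (x,x)$ because $x \in M_0$ and $P$ fixes $M_0$ pointwise. Hence $R_d^{M_0}(0_x) = (x,x) \in M_0\times M_0$, which is (D1).

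Next I would check (D2), which is the more delicate computation. The key is to differentiate the composition at $0_x$ and use the chain rule, together with the identification $T_{0_x}(T_xM_0) \cong T_xM_0$. Writing $R_d^{M_0} = (R^{M_0,1}, R^{M_0,2})$, we have $R^{M_0,i} = P\circ R^i \circ Ti_{M_0}$ for $i=1,2$. Differentiating at $0_x$: $T_{0_x}R^{M_0,i}_x = T_xP \circ T_{0_x}R^i_x \circ T_{0_x}(Ti_{M_0})_x$. Now $T_{0_x}(Ti_{M_0})_x$ is just the inclusion $T_xM_0 \hookrightarrow T_xM$ (under the canonical identifications), and $T_xP\colon T_xM \to T_xM_0$ restricts to the identity on $T_xM_0$ since $P\circ i_{M_0} = \mathrm{id}_{M_0}$ implies $T_xP|_{T_xM_0} = \mathrm{id}_{T_xM_0}$. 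Therefore, for $v_x \in T_xM_0$,
\[
\big(T_{0_x}R^{M_0,2}_x - T_{0_x}R^{M_0,1}_x\big)(v_x) = T_xP\Big(\big(T_{0_x}R^2_x - T_{0_x}R^1_x\big)(v_x)\Big) = T_xP(v_x) = v_x,
\]
using (D2) for $R_d$ in the middle step and the fact that $v_x \in T_xM_0$ in the last. This establishes (D2) for $R_d^{M_0}$. Finally, smoothness of $R_d^{M_0}$ is immediate since it is a composition of smooth maps (assuming, as is implicit, that the projector $P$ is chosen smooth), so $R_d^{M_0}$ is a discretization map on $M_0$.

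The main obstacle is not in the calculus but in the hypotheses: one must ensure that a smooth projector $P\colon M \to M_0$ with $P|_{M_0} = \mathrm{id}$ exists and that its tangent map restricts to the identity on $TM_0$ — this is exactly the tubular-neighbourhood / retraction structure alluded to in the diagram preceding the statement, and it is what makes the chain-rule argument go through. One should also be mildly careful that $R_d$ need only be a local diffeomorphism around the zero section (Proposition~\ref{prop1}), so $R_d^{M_0}$ is likewise only guaranteed to be a discretization map in the sense of Definition~\ref{def:discret} on a neighbourhood of the zero section of $TM_0$; with the standing assumption in the paper that discretization maps are global diffeomorphisms, and that $P$ is globally defined and smooth, the global statement follows.
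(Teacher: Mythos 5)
Your proposal is correct and follows essentially the same route as the paper's proof: verify (D1) using $P|_{M_0}=\mathrm{id}$, then verify (D2) via the chain rule at $0_x$, invoking (D2) for $R_d$ and the identity $T(P\circ i_{M_0})=\mathrm{id}_{TM_0}$. The paper writes the (D2) computation out with an explicit $\left.\tfrac{d}{ds}\right|_{s=0}$ along the ray $s\mapsto sX_x$, but the content is identical, and your added caveats about the smoothness of $P$ and the local-versus-global issue are consistent with the paper's standing assumptions.
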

\begin{proof}
     First,  we show that  for all $x\in M_0$, $R_d^{M_0}(0_{x})=(x,x)$. That is
\begin{align*}
     R_d^{M_0}(0_{x})&=\left((P \times P) \circ R_d \circ Ti_{M_0}\right)(0_{x})\\
        &=\left((P \times P) \circ R_d \right)(0_{x})\\
        &=(P \times P)(x, x)=
        (x,x)\, ,
\end{align*}
because by definition  $P_{|M_0}={\rm id}_{|M_0}$.

Secondly, it must be proved that $T_{0_{x}}R_d^{M_0,2}-T_{0_{x}}R_d^{M_0,1}=id_{TM_0}$, where $$R_d^{M_0}(X_{x})=(R_d^{M_0,1}(X_{x}),R_d^{M_0,2}(X_{x})).$$ Let us compute: 
\begin{align*}
    &\left(T_{0_{x}}R_d^{M_0,2}-T_{0_{x}}R_d^{M_0,1}\right)(X_{x})\\
    &\hskip-0.6cm=\left.\frac{d}{ds}\right|_{s=0} \Bigl[R_d^{M_0,2}(sX_{x})-R_d^{M_0,1}(sX_{x}) \Bigr]\\
    &\hskip-0.6cm=\left.\frac{d}{ds}\right|_{s=0} \Bigl[P\circ R_d^{2}\circ Ti_{M_0}(sX_{x})-P\circ R_d^{1}\circ Ti_{M_0}(sX_{x})) \Bigr]\\
    &\hskip-0.6cm=TP\biggl[\left.\frac{d}{ds}\right|_{s=0} \Bigl[R_d^{2}\circ Ti_{M_0}(sX_{x})-R_d^{1}\circ Ti_{M_0}(sX_{x}) \Bigr] \biggr]\\
    &\hskip-0.6cm=TP\biggl[\left.\frac{d}{ds}\right|_{s=0} \Bigl[R_d^{2}(sTi_{M_0}(X_{x}))-R_d^{1}(sTi_{M_0}(X_{x})) \Bigr] \biggr]\\
&\hskip-0.6cm=TP\Bigl[T_{0_{x}}R_d^{2}-T_{0_{x}}R_d^{1} \Bigr](Ti_{M_0}(X_{x}))\\
&\hskip-0.6cm=TP\circ Ti_{M_0}(X_{x}) = T(P\circ i_{M_0})X_{x}=X_x
\end{align*}
for all $X_{x} \in T_xM_0$. For the proof we have used that $R_d$ is a discretization map  (see Definition~\ref{def:discret}) and that $P\circ (i_{M_0})_x={\rm id}_x$ because $P$ is a projector.
\end{proof}

Therefore, the discretization of the closed port-Hamiltonian system is given 
by
\begin{align*}
\left(R_d^{M_0}\right)^{-1}(x_k, x_{k+1})&=hJ(x_{k, k+1})dH(x_{k, k+1})+hB(x_{k, k+1})u_{x_{k, k+1}},\\
0&=\Phi_l(x_{k, k+1})=e_l \, B^T(x_{k, k+1})dH(x_{k, k+1}), \\
0&=(R_d^{M_0})^*\left({h}(d_T\phi_l)(x_k, x_{k+1})\right)
\end{align*}
where $x_{k, k+1}=\tau_{M_0}\left(\left(R_d^{M_0}\right)^{-1}(x_k, x_{k+1})\right)$.

\section{A particular case: nonholonomic dynamics}\label{example:noholonomic}. 
We consider a mechanical Lagrangian  $L: TQ\rightarrow {\mathbb R}$ defined by the following data: 
\begin{itemize}
\item A  Riemannian metric $g$  on a $n$-dimensional manifold $Q$ that defines the musical isomorphisms: 
$\flat_g: TQ\rightarrow T^*Q$ is the vector bundle isomorphism defined by 
$\langle \flat_g (v_q), w_q)=g_q(v_q,w_q)$, for all $v_q, w_q\in T_qQ$ and 
the inverse isomorphism denoted by $\sharp_g=(\flat_g)^{-1}: T^*Q\rightarrow TQ$.
The Riemannian metric defines  the kinetic energy ${\mathcal K}_g: TQ \to {\mathbb R}$ on $TQ$ 
by ${\mathcal K}_g(v_q)=\frac{1}{2} g_q(v_q, v_q)$.
\item A potential energy function $V\in C^{\infty}(Q)$.
\end{itemize}
The mechanical Lagrangian function $L: TQ\rightarrow {\mathbb R}$ is given by
\begin{equation}\label{def:lagrangian}
L={\mathcal K}_g  - V\circ \tau_Q.
\end{equation}
Observe that in local coordinates $(q^i, \dot{q}^i)$ for $TQ$, 
\[
L(q,\dot{q})=\frac{1}{2}g_{ij}(q)\dot{q}^i\dot{q}^j-V(q)\, ,
\]
where $g=g_{ij}dq^i\otimes dq^j$.

The classical Euler-Lagrange equations for the Lagrangian $L$ are
\[
\frac{d}{dt}\left(\frac{\partial L}{\partial \dot{q}^i}\right)-\frac{\partial L}{\partial q^i}=0, \quad 1\leq i\leq n\; .
\]

A mechanical nonholonomic system is defined by the triple $(Q, L, {\mathcal D})$ where $L$ is the mechanical Lagrangian  defined in (\ref{def:lagrangian}) 
and ${\mathcal D}$ is a nonintegrable distribution on the configuration
manifold $Q$. 
  The nonintegrable distribution  ${\mathcal D}$ restricts
  the possible   velocity vectors  
without imposing any restriction on the configuration space \cite{Bloch}. 
Locally, the nonholonomic constraints are given by a set of $m\leq n=\dim Q$ equations that are
linear on the velocities
\[
\mu^a_i(q)\dot{q}^i=0, \quad 1\leq a\leq m\; .
\]
The distribution ${\mathcal D}$ defines the vector subbundle ${\mathcal D}^o \subseteq T^*Q$, called the annihilator of ${\mathcal D}$,
spanned at each point by the one forms $\{\mu^a\}$  locally given by $\mu^a=\mu^a_i\, dq^i$. 

The Lagrange-d’Alembert principle  states that the constrained solutions for the mechanical nonholonomic problem $(Q, L, {\mathcal D})$  are those curves on $Q$ satisfying
 the following nonholonomic equations: 
\begin{align*}
\frac{d}{dt}\left(\frac{\partial L}{\partial \dot{q}^i}\right)-\frac{\partial L}{\partial q^i}&=\lambda_a\mu^a_i, \quad 1\leq i\leq n\; ,\\
\mu^a_i(q)\dot{q}^i&=0, \quad 1\leq a\leq m\; , 
\end{align*}
where $\lambda_a$ are Lagrange multipliers determined by taking the time derivative of the  nonholonomic constraints.

The previous equations are equivalent to the following closed port-Hamiltonian equations: 
\begin{align}
\begin{pmatrix}\dot{q}\\ 
\dot{p}\end{pmatrix}&=
\begin{pmatrix}0_{n\times n}&I_{n\times n}\\-I_{n\times n}& 0_{n\times n}\end{pmatrix}
dH+\begin{pmatrix} 0_{n\times m}\,  \\ \mu_{n\times m}\end{pmatrix}\lambda_{m\times 1}\, ,\label{eq.eqsnonh}\\
0&=\begin{pmatrix} 0_{n\times m}\\ \mu_{n\times m}\end{pmatrix}^T dH\, ,\label{eq.constraintsnonh}
\end{align}
where $\mu_{n\times m}$ is the matrix with coefficients $\mu_{ia}=\mu^a_i(q)$ 
and $H: T^*Q\rightarrow {\mathbb R}$ is the corresponding Hamiltonian function
\[
H(q^i, p_i)=\frac{1}{2}g^{ij}(q)p_ip_j+V(q),
\]
where $p_i=g_{ij}(q)\dot{q}^j$. 
Observe that Equations~\eqref{eq.constraintsnonh} are equivalent to
\[
\Phi^a(q, p)=\mu^a_i(q)g^{ij}(q)p_j=\mu^a_i(q)\dot{q}^i=0\, .
\]
\subsection{Method 1: First discretization}\label{ss:m1}
The numerical scheme can be obtained by using a discretization map $R_d: TT^*Q\rightarrow T^*Q\times T^*Q$. We illustrate the method using the cotangent lift of the midpoint rule under the assumption that $Q$ is a vector space. The corresponding discretization is: 
\begin{align}
\frac{q^i_{k+1}-q^i_k}{h}=&g^{ij}(q_{k+1/2})(p_{k+1/2})_j,\\
\frac{(p_{k+1})_i-(p_k)_i}{h}=&-\frac{1}{2}\dfrac{\partial g^{jl}(q_{k+1/2})}{\partial q^i}(p_{k+1/2})_j(p_{k+1/2})_l-\frac{\partial V}{\partial q^i}(q_{k+1/2})\nonumber\\&+\lambda_a\mu^a_i(q_{k+1/2}),\\
0&=\mu^a_i(q_{k+1/2})g^{ij}(q_{k+1/2})(p_{k+1/2})_j,
\end{align}
where $q_{k+1/2}=\tfrac{q_{k+1}+q_k}{2}$ and $p_{k+1/2}=\tfrac{p_{k+1}+p_k}{2}$.
This method is obviously related to the discrete Lagrange-d'Alembert's principle first proposed in~\cite{JorgeSonia}.  

\subsection{Method 2: Continuous constraint algorithm plus discretization}\label{section:method 2}

If we first apply the constraint algorithm, we add the total derivative of the nonholonomic constraints as an additional constraint: 
\[\dfrac{{\rm d}}{{\rm d}t} \Phi^a(q,p)=
\frac{\partial (\mu^a_i g^{ij})}{\partial q^k}g^{kl}p_lp_j+\mu^a_ig^{ij}\dot{p}_j=0\, .
\]
Using the time derivative of the momenta in~\eqref{eq.eqsnonh}, the Lagrange multipliers can be uniquely determined as follows:
\begin{align*}\dfrac{\partial (\mu^a_i g^{ij})}{\partial q^k}g^{kl}p_lp_j&+\mu^a_ig^{ij}\left( -\dfrac{\partial H}{\partial q^j}+\lambda_b\mu^b_j\right)=0\, , \\\dfrac{\partial (\mu^a_i g^{ij})}{\partial q^k}g^{kl}p_lp_j&+\mu^a_ig^{ij}\left( -\dfrac{1}{2}\, \dfrac{\partial g^{rs}}{\partial q^j} p_rp_s-\dfrac{\partial V}{\partial q^j}+\lambda_b\mu^b_j\right)=0\, , \\ \mu^a_ig^{ij}\mu^b_j\lambda_b&=\mu^a_ig^{ij}\, \dfrac{\partial V}{\partial q^j}+\dfrac{1}{2}\mu^a_ig^{ij}\,\dfrac{\partial g^{rs}}{\partial q^j} p_rp_s -
\dfrac{\partial (\mu^a_i g^{ij})}{\partial q^k}g^{kl}p_lp_j\, , \\
\lambda_b(q,p)&=C_{ab}\left(\mu^a_ig^{ij}\, \dfrac{\partial V}{\partial q^j}+\dfrac{1}{2}\mu^a_ig^{ij}\,\dfrac{\partial g^{rs}}{\partial q^j} p_rp_s -
\dfrac{\partial (\mu^a_i g^{ij})}{\partial q^k}g^{kl}p_lp_j\right)\, .
\end{align*}
where $(C_{ab})$ is the inverse matrix of $C^{ab}=\mu^a_i g^{ij}\mu^b_j$.

To define a discretization map on $M_0=\{(q, p)\in T^*Q\; |\; \mu^a_ig^{ij}p_j=0\}$ we use the Riemannian metric $ g$ to define the orthogonal projector ${\mathcal P}\colon T^*Q\rightarrow M_0$:
\[
{\mathcal P}(\alpha)=\alpha-C_{ab}(g^{ij}\alpha_j\mu^a_i)\mu^b\, ,
\] where $\mu^b$ is an element in $T^*Q$. It can be proved that the projector is well-defined using the language of matrices. Proposition~\ref{Prop:RdM0} guarantees the existence of the discretization map $R_d^{M_0}\colon TM_0\rightarrow M_0\times M_0$ defined by:
\begin{align*}
R_d^{M_0}(q,p; \dot{q}, \dot{p})=
\left(
q^-, \right. & p^- - C_{ab}(q^-)g^{ij}(q^-)p^-_j \mu^a_i(q^-)\mu^b(q^-),\\
& \left. q^+,\;  p^+ - C_{ab}(q^+)g^{ij}(q^+)p^+_j \mu^a_i(q^+)\mu^b(q^+) \right) \, ,
\end{align*}
where $q^-=q-\frac{1}{2}\dot{q}$, $p^-=p-\frac{1}{2}\dot{p}$,  $q^+=q+\frac{1}{2}\dot{q}$ and $p^+=p+\frac{1}{2}\dot{p}$. 

As a consequence, we obtain the following implicit method: 

\begin{align}
q_k&=q-\frac{h}{2}g^{ij}(q)p_j\, ,\label{eq-nh-1}\\
p_k&={\mathcal P}_{q_k}\left(p-\frac{h}{2}\left(
-\frac{\partial H}{\partial q}(q,p)+{\lambda}_{\alpha}(q, p)\mu^a_i(q)
\right)\right)\, ,\label{eq-nh-2}\\
0&=\mu^a_i(q)g^{ij}(q)p_j\, ,\label{eq-nh-3}
\\
q_{k+1}&=q+\frac{h}{2}g^{ij}(q)p_j \, ,\label{eq-nh-4}\\
p_{k+1}&={\mathcal P}_{q_{k+1}}\left(p+\frac{h}{2}\left(
-\frac{\partial H}{\partial q}(q,p)+{\lambda}_{\alpha}(q, p)\mu^a_i(q)
\right)\right) \, .\label{eq-nh-5}
\end{align}
The implicit method works as follows.
Given $(q_k, p_k)\in M_0$ that verifies \begin{equation*} \mu^a_i(q_k)g^{ij}(q_k)(p_k)_j=0, \end{equation*} find the unique $(q, p)$ verifying Equations (\ref{eq-nh-1}), (\ref{eq-nh-2}) and (\ref{eq-nh-3}). Then, we obtain the next step $(q_{k+1}, p_{k+1})\in M_0$ substituting in Equations (\ref{eq-nh-4}) and (\ref{eq-nh-5}). 

Observe that this method preserves exactly the nonholonomic constraints, even though the method is based originally on the mid-point discretization.

\section{Future work} \label{Sec:Future}
The mathematical results obtained in this paper open some interesting research lines: 
\begin{itemize}
\item The application of our techniques to optimal control problems, vakonomic dynamics and, in general, systems defined using Morse families like in \cite{cendra1}.
Adding dissipative forces is also an straightforward work using the techniques in our paper. 
\item The development of examples on nonlinear spaces (Lie groups, etc) using discretization maps on manifolds.  In our examples we have typically worked on vector spaces (specially using the midpoint rule), but it is not a restriction of our methods. 
\item The construction of geometric integrators for reduced system such as controlled Euler-Poincar\'e equations... Reduced systems are of great interest in applications. A combination of the recent results obtained in \cite{liñán2025retractionmapsseedgeometric} and the methods developed in our paper will lead to those geometric integrators.
\item The addition of holonomic constraints is a noteworthy strategy to avoid to work with non-linear spaces. The idea is to derive geometric integrators for general Dirac systems defined on submanifolds of an euclidean space adding holonomic constraints into the picture  as in \cite{liñán2024newperspectivesymplecticintegration}.

\item The method for nonholonomic systems proposed in Section  \ref{section:method 2} is new in the extensive literature on the subject (see \cite{klas}
and refereces therein). Observe that for construction the method preserves exactly the nonholonomic constraints even though it is based on the mid-point rule. 
In a forthcoming paper, we will study the energy behavior of that method and produce other methods based on different discretization maps, as well as higher-order methods based on this technique.
\item In this paper we have considered Dirac systems on the ``extended" sense, that is, almost-Dirac systems. If the Dirac structure is integrable it would be interesting to perform discretizations that preserve the structure (symplectic integrators, Poisson integrators, presymplectic integrators...). This is an open problem in the geometric integration literature (see, for instance, \cite{hairer}). In a future paper, we want to derive geometric integrators based on the structure of presymplectic groupoid which is the geometric discretization of a Dirac structure (see, for instance,  \cite{BCW2004,BIL2024}).

\end{itemize}
\section*{Acknowledgements}

The authors acknowledge financial support from Grants PID2022-137909-NB-C21,    RED2022-134301-TD and CEX2023-001347-S funded by MCIN/AEI/10.13039/501100011033.

\bibliography{references}

\end{document}